\pgfplotsset{compat=1.15}
\definecolor{uuuuuu}{rgb}{0.26666666666666666,0.26666666666666666,0.26666666666666666}
\definecolor{ududff}{rgb}{0.30196078431372547,0.30196078431372547,1.}
\definecolor{zzttqq}{rgb}{0.6,0.2,0.}
\definecolor{ududff}{rgb}{0.30196078431372547,0.30196078431372547,1.}
\theoremstyle{definition}
\newtheorem{df}{Definition} [section]
\newtheorem{q}{Question}
\theoremstyle{plain}
\newtheorem{thm}[df]{Theorem}
\newtheorem{lemma}[df]{Lemma}
\newtheorem{cor}[df]{Corollary}
\title{On the maximum area of inscribed polygons}
\author{Dan Ismailescu}
\address{Mathematics Department, Hofstra University, Hempstead, NY 11549}
\email{dan.p.ismailescu@hofstra.edu}
\author{Min Jung Kim}
\address{Choate Rosemary Hall, Wallingford, CT 06492}
\email{alee21@choate.edu}
\author{Eric Wang}
\address{Canterbury School, New Milford, CT 06776}
\email{jhlee00502@gmail.com}
\begin{document}

\begin{abstract}
Given a convex $n$-gon $P$ and a positive integer $m$ such that
$3\le m\le n-1$, let $Q$ denote the largest area convex $m$-gon
contained in $P$. We are interested in the minimum value of $\Delta(Q)/\Delta(P)$, the ratio of
the areas of these two polygons. More precisely, given positive integers $n$ and
$m$, with $3 \le m \le n-1$, define
\begin{equation*}
f_n(m)=\min_{P\in \mathcal {P}_n} \max_{Q \subset P,|Q|=m}
\frac{\Delta(Q)}{\Delta(P)}
\end{equation*}
where the maximum is taken over all $m$-gons contained in $P$, and
the minimum is taken over $\mathcal{P}_n$, the entire class of
convex $n$-gons. The values of $f_4(3)$, $f_5(4)$ and $f_6(3)$ are known.
In this paper we compute the values of $f_5(3)$, $f_6(5)$ and $f_6(4)$.

In addition, we prove that for all $n\ge 6$ we have
\begin{equation*}
\frac{4}{n}\cdot\sin^2\left(\frac{\pi}{n}\right)\le 1-f_n(n-1)\le
\min\left(\frac{1}{n}, \frac{4}{n}\cdot\sin^2\left(\frac{2\pi}{n}\right)\right).
\end{equation*}
These bounds can be used to improve the known estimates for $f_n(m)$.
\end{abstract}

\maketitle

\section{\bf Introduction}
In 1940, S\`{a}s \cite{SAS40} proved the following:
\begin{thm}
Let $K$ be a compact convex body in the plane and let $P_m$ be the
largest area $m$-gon contained in $K$. Then
\begin{equation}\label{sas}
\frac{\Delta(P_m)}{\Delta(K)}\ge \frac{m}{2\pi}\cdot
\sin\left(\frac{2\pi}{m}\right)
\end{equation}
where equality holds if and only if $K$ is an ellipse.
\end{thm}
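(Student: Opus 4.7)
The plan is to use the affine invariance of $\Delta(P_m)/\Delta(K)$ to reduce to a comparison between $K$ and an ellipse, and to extract that comparison from first-order optimality conditions for the maximum inscribed $m$-gon. Both $\Delta(P_m)$ and $\Delta(K)$ scale by $|\det A|$ under any affine map $x\mapsto Ax+b$, so the ratio is affine-invariant. By a standard Hausdorff-approximation argument the ratio depends continuously on $K$, so it suffices to treat convex bodies with $C^2$ boundary of strictly positive curvature, in which case a maximum inscribed $m$-gon $P_m$ exists and has all vertices on $\partial K$.

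The equality case is immediate and fixes the constant. For a disk of radius $r$, the maximum inscribed $m$-gon is the regular one, of area $\tfrac{mr^2}{2}\sin(2\pi/m)$; dividing by $\pi r^2$ recovers the right-hand side of \eqref{sas} exactly. By affine invariance every ellipse achieves equality, so the full theorem amounts to the claim that ellipses are the \emph{only} extremizers.

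For the inequality itself, label the vertices of $P_m$ cyclically as $v_1,\dots,v_m\in\partial K$, let $T_i$ denote the triangle $v_{i-1}v_iv_{i+1}$, and let $C_i$ be the cap of $K$ bounded by the chord $[v_iv_{i+1}]$ and the boundary arc between those vertices, so that $\Delta(K)=\Delta(P_m)+\sum_{i=1}^{m}\Delta(C_i)$. Varying $v_i$ along $\partial K$ and setting the derivative of $\Delta(P_m)$ to zero forces the tangent to $\partial K$ at $v_i$ to be parallel to the chord $[v_{i-1}v_{i+1}]$; equivalently, $v_i$ is the apex of the largest triangle inscribed in the arc $\widehat{v_{i-1}v_{i+1}}$ with base $[v_{i-1}v_{i+1}]$. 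Proving \eqref{sas} then reduces to bounding the total cap area $\sum\Delta(C_i)$ from above in terms of $\Delta(P_m)$, subject to these $m$ tangency conditions.

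I expect the main obstacle to be the cap-comparison lemma that completes this scheme: under the affine-invariant constraints coming from the tangency conditions above, an elliptic arc should maximize the cap area relative to the data of its chord and of the tangent directions at the endpoints. A natural route is a constrained variational argument in which the chord length and tangent angles enter as constraints and the cap area is written as a functional of the arc; the Euler--Lagrange equation should single out affine images of circular arcs as extremals. The delicate step is global: one must check that the local extremality can be realized simultaneously by all $m$ caps, which forces the $m$ arcs to glue into one common ellipse and gives the characterization of the equality case.
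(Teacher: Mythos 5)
The paper does not actually prove this statement: it is S\`{a}s' theorem, quoted as background from \cite{SAS40}, so there is no internal proof to compare against, and your proposal has to stand on its own. Its preliminary reductions are all sound: affine invariance of the ratio, approximation by smooth strictly convex bodies (though note that a density argument alone cannot deliver the \emph{equality characterization}, since ``$K$ is an ellipse'' is not preserved under the limits you take), vertices of $P_m$ on $\partial K$, the first-order condition that the tangent at $v_i$ is parallel to the chord $v_{i-1}v_{i+1}$, and the computation for the disk that fixes the constant. But the entire substance of the theorem is delegated to your ``cap-comparison lemma,'' which you do not prove --- and which, in the form you state it, is false.

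Concretely: fix the chord $[v_iv_{i+1}]$ and the tangent directions at its endpoints, and let $T$ be the intersection of the two tangent lines. Every convex arc compatible with these data lies inside the triangle $v_iTv_{i+1}$, and the cap area has supremum $\Delta(v_iTv_{i+1})$, approached by convex arcs hugging the two tangent segments. In particular the elliptic arc is not even a \emph{local} maximizer of cap area under your constraints: it can be inflated toward the tangent triangle while keeping the endpoints and end-tangents fixed, strictly increasing the cap area, so no Euler--Lagrange analysis of ``maximize cap area given chord and end tangents'' can single out affine images of circular arcs, and there is no interior maximizer at all. Moreover, even the true supremum is quantitatively useless: for the disk of radius $r$ with the regular inscribed $m$-gon, each cap has area $\frac{r^2}{2}\bigl(\frac{2\pi}{m}-\sin\frac{2\pi}{m}\bigr)$, while the corresponding tangent triangle has area $r^2\sin^2\left(\frac{\pi}{m}\right)\tan\left(\frac{\pi}{m}\right)$, larger by a factor tending to $3/2$ as $m\to\infty$; so bounding caps by anything determined only by chord-and-tangent data necessarily loses the sharp constant precisely in the equality case. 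What is missing is an additional affine-invariant quantity attached to each arc at which the comparison must be made (in modern language, caps must be compared at fixed \emph{affine arclength} --- the affine-isoperimetric mechanism that actually underlies the theorem), and that ingredient cannot be generated by your $m$ tangency conditions: these are satisfied by a whole one-parameter family of critical inscribed polygons (in an ellipse, by every affine-regular inscribed $m$-gon), so they carry no information capable of forcing your $m$ arcs to glue into a single ellipse. This is also why the known proofs are global rather than local: one parametrizes $\partial K$ in a suitably adapted way, considers the one-parameter family of inscribed $m$-gons with vertices at parameters $t+2\pi k/m$, and shows the \emph{average} area over $t$ already meets the bound $\frac{m}{2\pi}\sin\left(\frac{2\pi}{m}\right)\Delta(K)$. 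As it stands, your proposal is a program whose central lemma is false as formulated, not a proof.
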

\noindent Throughout the paper $\Delta(.)$ denotes the
area. At about the same time, Fejes T\'{o}th \cite{LF39} proved an analogous theorem
dealing with $m$-gons containing $K$.
A few years later, L\'{a}z\'{a}r \cite {lazar47} proved a result
involving both polygons that contain $K$ and polygons that are
contained in $K$. The following year, John \cite {J48}
published one of the major breakthroughs in the field of
approximation of convex bodies. John's ellipsoid theorem states that
every $d$-dimensional convex body $K$ lies between two concentric
homothetic ellipsoids, whose ratio is no greater than $d$.

The results mentioned above have various applications to packing and
covering problems as well as in the design of numerous geometric
algorithms - see \cite {ABCG07, T53, TK93, SFRW90, Y87}.

In 1992, motivated by a problem in robot motion planning, Fleischer,
Mehlhorn, Rote, Welzl, and Yap \cite {FMRWY92} raised the following
version of S\`{a}s' problem.
\begin{q}
Given a convex $n$-gon $P$ and a positive integer $m$ such that
$3\le m\le n-1$, let $Q$ denote the largest area convex $m$-gon
contained in $P$. How small can $\Delta(Q)/\Delta(P)$, the ratio of
the areas of these two polygons, be?
\end{q}
More precisely, given positive integers $n$ and $m$, with $3\le m\le
n-1$, define
\begin{equation*}
f_n(m)=\min_{P\in \mathcal {P}_n} \max_{Q \subset P,|Q|=m}
\frac{\Delta(Q)}{\Delta(P)}
\end{equation*}
where the maximum is taken over all $m$-gons contained in $P$, and
the minimum is taken over $\mathcal{P}_n$, the entire class of
convex $n$-gons.
It is easy to see that one can restrict the vertices of $Q$ to an $m$-subset of the vertices of $P$.

There are just a few known values of $f_n(m)$. It
is easy to see that $f_4(3)=\frac{1}{2}$. Indeed, dividing a convex
quadrilateral by a diagonal, we get two triangles at least one of
which has area greater or equal than half of the area of the
quadrilateral. On the other hand, the case of parallelograms shows
that the value $1/2$ cannot be replaced by a larger one.

Du and Ding \cite{DD06, DD08} proved that $f_5(4)=(5+\sqrt{5})/10$, while Fleischer et al. \cite{FMRWY92} showed that $f_6(3)=4/9$.
To the best of our knowledge, these are the only exact values of $f_n(m)$ known at this time.

In this paper we show that
\begin{equation}\label{6465}
f_5(3)=\frac{1}{\sqrt{5}},\,\, f_6(4)=\frac{2}{3},\,\, \text{and}\,\, f_6(5)=\frac{5}{6}.
\end{equation}
Moreover, we find the exact order of magnitude of $f_n(n-1)$ by showing that for every $n\ge 6$
\begin{equation}\label{gn}
\frac{4}{n}\cdot\sin^2\left(\frac{\pi}{n}\right)\le 1-f_n(n-1)\le
\min\left(\frac{1}{n},\frac{4}{n}\cdot\sin^2\left(\frac{2\pi}{n}\right)\right).
\end{equation}
\noindent We use this inequality to prove that $f_n(m)\ge m/n$ for all $n-1\ge m\ge 5$.

\subsection{\bf The Main Technique} Throughout the entire paper we
use the \emph{outer product} of two vectors to express areas. This
operation, also known as \emph{exterior product}, is defined as follows:

For any two vectors $\mathbf{v}=(a,
\, b)$ and $\mathbf{u}=(c,\,d)$, the outer product of $\mathbf{v}$ and $\mathbf{u}$ be given by
$$\mathbf{v}\wedge \mathbf{u}:=\frac{1}{2}\cdot (ad-bc).$$

It is easy to see that the outer product represents the \emph{signed
area} of the triangle determined by the vectors $\mathbf{v}$ and
$\mathbf{u}$, where the $\pm$ sign depends on whether the angle
between $\mathbf{v}$ and $\mathbf{u}$ - measured in the
counterclockwise direction from $\mathbf{v}$ towards $\mathbf{u}$ -
is smaller than or greater than $180^{\circ}$.

The following properties of the outer product are simple
consequences of the definition and are going to be used extensively
in the remaining part of the paper.
\begin{eqnarray*}
1.&& \mathbf{v}\wedge \mathbf{u} =- \mathbf{u}\wedge\mathbf{v},\quad \text{
anticommutativity. In particular,} \quad \mathbf{v}\wedge\mathbf{v}=0.\\
2.&& (\alpha \mathbf{v}+\beta \mathbf{u})\wedge \mathbf{w}=\alpha (\mathbf{v}\wedge\mathbf{w})
+\beta(\mathbf{u}\wedge\mathbf{w}), \quad \text{linearity}.
\end{eqnarray*}

\section{\bf Setup for pentagons}

In this section we are going to prove that $f_5(4)=(5+\sqrt{5})/10$
and $f_5(3)={1}/{\sqrt{5}}$. As mentioned before, the first result has
already been proved in \cite {DD06}, \cite {DD08}.
We believe that our approach leads to simpler, more
transparent proofs.

Let $ABCDE$ be an arbitrary convex pentagon.
After an eventual relabeling of the vertices we may assume that
\begin{equation}\label{ABC}
\Delta(DEA)=\min\{\Delta(ABC),\, \Delta(BCD),\, \Delta(CDE),\,
\Delta(DEA),\, \Delta(EAB)\}
\end{equation}
In the literature, the triangles formed by three consecutive
vertices of a convex polygon are sometimes called \emph{ears}.
Assumption (\ref {ABC}) above fixes the ear of least area. Denote
the intersection of $AC$ and $BD$ by $O$. Then define $\mathbf{u}
= \overrightarrow{OD}$, $\mathbf{v} = \overrightarrow{OA}$. After
an appropriate scaling, we may assume that $\mathbf{u}\wedge
\mathbf{v} = \Delta(AOD) = 1$. Since $A$, $O$, and $C$ are
collinear and $D$, $O$, and $B$ are collinear, we can write
$\overrightarrow{BO} = a \cdot \overrightarrow{OD} = a \mathbf{u}$
and $\overrightarrow{CO} = b \cdot \overrightarrow{OA} = b
\mathbf{v}$, with $a,\ b > 0$ (see figure 1).

\vspace{0.5cm}
\begin{tikzpicture}[line cap=round,line join=round,>=triangle 45,x=1.0cm,y=1.0cm,scale=1.9]
\clip(2.3,2.5) rectangle (9.5,8);
\draw [line width=2.pt] (4.,6.)-- (8.5,3.5);
\draw [line width=2.pt] (4.5,3.5)-- (9.,6.);
\draw [->,line width=2.pt] (6.5,4.611111111111111) -- (7.970540072719675,5.428077818177597);
\draw [->,line width=2.pt] (6.5,4.611111111111111) -- (5.338291767194487,5.25650457378084);
\draw (7.841687420858986,5.2) node[anchor=north west] {\Large{$\mathbf{u}$}};
\draw [line width=2.pt] (4.5,3.5)-- (8.5,3.5);
\draw (5.8,3.4) node[anchor=north west] {\Large{$a\mathbf{u}-b\mathbf{v}$}};
\draw [line width=2.pt] (4.,6.)-- (4.5,3.5);
\draw [line width=2.pt] (4.,6.)-- (6.5,7.5);
\draw [line width=2.pt] (9.,6.)-- (6.5,7.5);
\draw (6.103004519809618,6.95) node[anchor=north west] {\rotatebox{90.0}{\Large{$\mathbf{w}=c\mathbf{u}+d\mathbf{v}$}}};
\draw [line width=2.pt] (8.5,3.5)-- (9.,6.);
\draw [line width=2.pt] (6.5,4.611111111111111)-- (6.5,7.5);
\draw (5.213207976331412,5) node[anchor=north west] {\Large{$\mathbf{v}$}};
\draw (5.458669091773675,4) node[anchor=north west] {\Large{$a\mathbf{u}$}};
\draw (7.074621435101912,4.1) node[anchor=north west] {\Large{$b\mathbf{v}$}};
\draw [->,line width=2.pt] (4.5,3.5) -- (5.540673909390832,4.078152171883795);
\draw [->,line width=2.pt] (8.5,3.5) -- (7.523584905660377,4.0424528301886795);
\draw [->,line width=2.pt] (8.5,3.5) -- (8.738646343211737,4.693231716058685);
\draw [->,line width=2.pt] (4.,6.) -- (4.28175863702411,4.591206814879449);
\draw [->,line width=2.pt] (6.5,7.5) -- (5.247730178457384,6.748638107074431);
\draw [->,line width=2.pt] (9.,6.) -- (7.736615939327128,6.758030436403724);
\draw (3.6586209118637414,5.15) node[anchor=north west]
{\rotatebox{100.0}{\Large{$-a\mathbf{u}-\mathbf{v}$}}};
\draw (8.711028871383672,5.325315190586997) node[anchor=north west] {\rotatebox{80.0}{\Large{$\mathbf{u}+b\mathbf{v}$}}};
\draw (7.36099273645122,7.514010136613849) node[anchor=north west] {\rotatebox{-29.999999999999996}{\Large{$(c-1)\mathbf{u}+d\mathbf{v}$}}};
\draw (4.282501246946162,7.75) node[anchor=north west] {\rotatebox{29.999999999999996}{\Large{$-c\mathbf{u}+(1-d)\mathbf{v}$}}};
\draw [->,line width=2.pt] (4.5,3.5) -- (6.5,3.5);
\draw [->,line width=2.pt] (6.5,4.611111111111111) -- (6.5,6.);
\draw (4.05749522445742,3.5) node[anchor=north west] {\LARGE{$B$}};
\draw (8.475795302418168,3.5) node[anchor=north west] {\LARGE$C$};
\draw (9.079220544547066,6.3685249312166174) node[anchor=north west] {\LARGE$D$};
\draw (6.348465635251882,8.066297646358942) node[anchor=north west] {\LARGE$E$};
\draw (6.32801054229836,4.5) node[anchor=north west] {\LARGE$O$};
\draw (3.5768005400496534,6.450345303030706) node[anchor=north west] {\LARGE$A$};
\begin{scriptsize}
\draw [fill=ududff] (4.,6.) circle (2pt);
\draw [fill=ududff] (4.5,3.5) circle (2pt);
\draw [fill=ududff] (8.5,3.5) circle (2pt);
\draw [fill=ududff] (9.,6.) circle (2pt);
\draw [fill=ududff] (6.5,7.5) circle (2pt);
\draw [fill=uuuuuu] (6.5,4.611111111111111) circle (2.0pt);
\end{scriptsize}
\end{tikzpicture}
\vspace{-1.5cm}
\label{fig1}
\begin{figure}[h!]
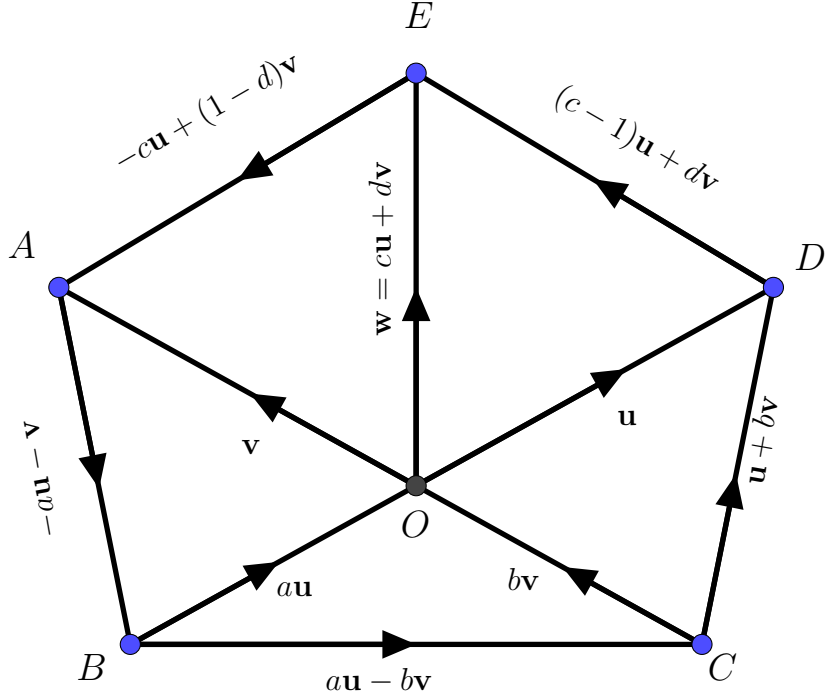

\caption{Defining a pentagon }
\end{figure}

Without loss of generality we may assume that $a\leq b$.
\noindent Using the triangle rule, we obtain that
$\overrightarrow{AB} = -a\mathbf{u}-\mathbf{v}$,
$\overrightarrow{BC} = a\mathbf{u} - b\mathbf{v}$, and
$\overrightarrow{CD} = \mathbf{u}+b\mathbf{v} $. We know that
every vector in the plane can be written as a linear combination of
any two independent vectors.

Set $\overrightarrow{OE} = \mathbf{w} = c \mathbf{u} + d \mathbf{v}$, with $c, d>0$.
It follows that $\overrightarrow{DE} = \mathbf{w} - \mathbf{u}$
and $\overrightarrow{AE} = \mathbf{v} - \mathbf{w}$. We have
\begin{eqnarray*}
\Delta(ODE) &=& \mathbf{u}\wedge \mathbf{w} =
\mathbf{u}\wedge (c\mathbf{u} + d\mathbf{v}) = c(\mathbf{u}\wedge \mathbf{u}) + d (\mathbf{u}\wedge\mathbf{v}) = d,\\
\Delta(OAE)&=& \mathbf{w}\wedge \mathbf{v} =
(c\mathbf{u} + d\mathbf{v})\wedge \mathbf{v} = c(\mathbf{u}\wedge \mathbf{v}) + d (\mathbf{v}\wedge \mathbf{v}) = c.
\end{eqnarray*}

After similar calculations, we can write the areas of various
triangles in pentagon $ABCDE$ in terms of the positive constants
$a,\,b,\,c,\,d$ as shown below:
\begin{eqnarray*}
\Delta(OAB) &=& -\mathbf{v}\wedge a\mathbf{u} = a,\\
\Delta(OBC) &=& -a\mathbf{u}\wedge -b\mathbf{v} =ab,\\
\Delta(OCD) &=& -b\mathbf{v}\wedge (b\mathbf{v}+ \mathbf{u}) =b.
\end{eqnarray*}

We can now compute the total area of the pentagon

$\Delta(ABCDE) = \Delta(OAB) + \Delta(OBC) + \Delta(OCD) +
\Delta(ODE) + \Delta(OEA)$, that is,
\begin{equation}\label{ABCDE}
\Delta(ABCDE) = a+b+c+d+ab.
\end{equation}

Next, we compute the areas of the ears of the pentagon.
\begin{eqnarray}\label{laterlemma}
\Delta(ABC) &=& \overrightarrow{AB} \wedge \overrightarrow{BC}= (-a\mathbf{u}-\mathbf{v})\wedge (a\mathbf{u} - b\mathbf{v}) = a + ab,\notag\\
\Delta(BCD) &=& \overrightarrow{BC} \wedge \overrightarrow{CD}= (a\mathbf{u}- b\mathbf{v})\wedge (b\mathbf{v} + \mathbf{u}) = b + ba,\notag\\
\Delta(CDE) &=& \overrightarrow{CD} \wedge \overrightarrow{DE}= (b\mathbf{v}+\mathbf{u})\wedge ((c-1)\mathbf{u} + d\mathbf{v}) = b + d - bc,\\
\Delta(DEA) &=& \overrightarrow{DE} \wedge \overrightarrow{EA}= ((c-1)\mathbf{u}+ d\mathbf{v})\wedge (-c\mathbf{u} + (1-d)\mathbf{v}) = c +d -1,\notag\\
\Delta(EAB) &=& \overrightarrow{EA} \wedge \overrightarrow{AB}= (-c\mathbf{u}+ (1-d)\mathbf{v})\wedge (-a\mathbf{u} - \mathbf{v}) = a + c  -ad.\notag
\end{eqnarray}
Translating assumption (\ref{ABC}) in terms of $a$, $b$, $c$, and $d$, we obtain
\begin{eqnarray*}
\Delta(DEA) &\leq& \Delta(ABC)\iff c + d - 1 \leq a + ab.
\iff c + d \leq 1+a+ab\\
\Delta(DEA) &\leq& \Delta(CDE) \iff c+d-1 \leq b+d-
bc\iff (c-1)(1+b) \leq 0\iff c\leq 1,\\
\Delta(DEA) &\leq& \Delta(EAB) \iff c+d-1 \leq a + c -
ad\iff (d-1)(1+a) \leq 0\iff d\leq 1.
\end{eqnarray*}
Therefore,
\begin{equation}\label{cdineq}
c\le 1, \,\,d\le 1,\,\,\text{and}\,\,c+d\le 1+a+ab
\end{equation}

We are now in position to find the values of $f_5(4)$ and $f_5(3)$.

\subsection{\bf Large Quadrilaterals in Pentagons}

\begin{thm}(\cite {DD06, DD08}) \label{f54}
$f_5 (4) = \frac{5 + \sqrt{5}}{10}.$
\end{thm}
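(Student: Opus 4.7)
My plan is to reduce Theorem \ref{f54} to a single algebraic inequality in the parameters $a,b,c,d$ introduced above, then resolve it by a brief case split at the threshold $a(1+b)=1$. Since the largest inscribed quadrilateral has its vertices among those of the pentagon (as remarked in the Introduction), it must equal $P$ minus one ear; and the labeling (\ref{ABC}) forces the minimum ear to be $\Delta(DEA)=c+d-1$, so the extremal quadrilateral is $ABCD$ with area $\Delta(ABCDE)-\Delta(DEA)=1+a+b+ab$. Theorem \ref{f54} therefore reduces to proving
\begin{equation*}
\frac{1+a+b+ab}{a+b+c+d+ab}\ge \frac{5+\sqrt{5}}{10}
\end{equation*}
under $a\le b$ together with (\ref{cdineq}).

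Clearing denominators, I need $(5+\sqrt{5})(c+d)\le 10+(5-\sqrt{5})(a+b+ab)$. Now (\ref{cdineq}) supplies two upper bounds on $c+d$, namely $c+d\le 1+a(1+b)$ and $c+d\le 2$, and I will split on which one is tighter. If $a(1+b)\le 1$, I use the first bound; writing $a+b+ab=b+a(1+b)$ and collecting terms, the inequality simplifies (after dividing by $1+b>0$) to the single condition $a\le (\sqrt{5}-1)/2$, which is automatic because $a\le b$ yields $a^2+a\le a(1+b)\le 1$ and the positive root of $t^2+t-1$ is $(\sqrt{5}-1)/2$. If instead $a(1+b)\ge 1$, I use the second bound; the inequality reduces to $a+b+ab\ge (\sqrt{5}+1)/2$, and the symmetric estimate $b(1+b)\ge a(1+b)\ge 1$ gives $b\ge(\sqrt{5}-1)/2$, so $a+b+ab=b+a(1+b)\ge b+1\ge(\sqrt{5}+1)/2$.

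For the matching upper bound on $f_5(4)$ I will exhibit the regular pentagon, which in this parametrization corresponds to $a=b=(\sqrt{5}-1)/2$, $c=d=1$. A direct substitution shows that all five ears then have a common area, every inscribed quadrilateral has area $(3+\sqrt{5})/2$, and the pentagon itself has area $(5+\sqrt{5})/2$, giving the ratio $(5+\sqrt{5})/10$ exactly. The main obstacle is not any single estimate but recognizing the correct dichotomy $a(1+b)\le 1$ versus $a(1+b)\ge 1$; once that is in place, the hypothesis $a\le b$ lets the same quadratic $t^2+t-1$ handle both halves of the argument, and the golden ratio $(\sqrt{5}-1)/2$ emerges with no further input.
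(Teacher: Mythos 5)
Your proposal is correct and takes essentially the same route as the paper: the identical reduction (via $\Delta(ABCD)=1+a+b+ab$ and $\Delta(ABCDE)=a+b+c+d+ab$) to a single algebraic inequality, resolved by the same dichotomy between the two bounds $c+d\le 1+a+ab$ and $c+d\le 2$ from \eqref{cdineq}, with $a\le b$ invoked at the same junctures and the regular pentagon supplying the matching upper bound. The only difference is that you place the case boundary at $a(1+b)=1$ rather than the paper's $a=(\sqrt{5}-1)/2$, a cosmetic variation since both halves still close through the quadratic $t^2+t-1$ and the same golden-ratio threshold.
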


\begin{proof}
It is easy to check that $f_5(4) \leq \frac{5+\sqrt{5}}{10}$ as the
case of the regular pentagon shows. From (\ref {ABC}) it follows
that $ABCD$ is the largest quadrilateral contained in the pentagon
$ABCDE$. It remains to prove the opposite inequality, that is,
\begin{equation}\label{q1}
\frac{\Delta(ABCD)}{\Delta(ABCDE)} \geq \frac{5 + \sqrt{5}}{10}.
\end{equation}
Since $\Delta(ABCDE)=\Delta(ABCD)+\Delta(DEA)$, (\ref {q1}) is
equivalent to
\begin{equation}\label{q2}
\frac{\Delta(DEA)}{\Delta(ABCDE)} \leq \frac{5 - \sqrt{5}}{10} =
\frac{2}{5+\sqrt{5}}.
\end{equation}
Since $\Delta(DEA)=c+d-1$ and $\Delta(ABCDE)=a+b+c+d+ab$,
relation (\ref{q2}) translates to $(5+\sqrt{5})(c+d-1) \leq
2a+2b+2c+2d+2ab$, that is,
\begin{equation}\label{T1}
(3+\sqrt{5})(c+d)\leq 5+\sqrt{5}+2a+2b+2ab.
\end{equation}

To prove \eqref{T1} we will proceed in two cases.

\emph{Case 1:} $a \leq \frac{\sqrt{5}- 1} {2}$.

\noindent Using (\ref{cdineq}), we have $c+d\le 1+a+ab$, so it would be sufficient to show that
\begin{align*}
(3+\sqrt{5})(1+a+ab) &\leq 5+\sqrt{5}+2a+2b+2ab \iff\\
\iff(1+\sqrt{5})(a+ab) &\leq 2+2b
\iff(1+\sqrt{5})a\leq 2 \iff a\leq
\frac{\sqrt{5}-1}{2}.
\end{align*}

\smallskip

\emph{Case 2:} $a \geq \frac{\sqrt{5} - 1} {2}$.

\noindent In this case, using (\ref{cdineq}) we have $c+d\le 2$, so it would
suffice to prove that
\begin{equation*}
2(3+\sqrt{5})\leq 5+\sqrt{5}+2a+2b+2ab \stackrel{a\le
b}{\Longleftarrow} 1+\sqrt{5}\leq 4a+2a^{2}
\stackrel{a>0}{\iff} a\geq \frac{\sqrt{5}-1} {2}.
\end{equation*}
This completes the proof of Theorem \ref{f54}.
\end{proof}

\subsection{\bf Large Triangles in Pentagons}

We will now compute the areas of three of the remaining five triangles determined from the vertices of $ABCDE$ - refer to figure 1.
\begin{align}\label{mama}
\Delta(ABD) &= \Delta(ABCDE)-\Delta(DEA)-\Delta(BCD)=a+1\notag\\
\Delta(ACD) &= \Delta(ABCDE)-\Delta(DEA)-\Delta(ABC)=b+1\\
\Delta(BCE) &= \Delta(ABCDE)-\Delta(BAE)-\Delta(CDE)=ab+ad+bc.\notag
\end{align}
Recall that $\Delta(BCD) = ab+b$.

\begin{thm}\label{f53}
$f_5(3)=\frac{1}{\sqrt{5}}$.
\end{thm}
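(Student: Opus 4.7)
The plan is to prove the two inequalities $f_5(3)\le 1/\sqrt{5}$ and $f_5(3)\ge 1/\sqrt{5}$ separately. The upper bound is witnessed by the regular pentagon: by affine invariance and a short direct calculation, each of the five ``skip-one-vertex'' triangles (such as $\triangle ACE$) has area exactly $\Delta(\text{pentagon})/\sqrt{5}$, while every ear is strictly smaller.

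For the lower bound I would work within the parameterization $(a,b,c,d)$ from the section. First I would complete the list of triangle areas in (\ref{laterlemma}) and (\ref{mama}) with the two remaining ones: the triangulation $\Delta(ABCDE)=\Delta(ABC)+\Delta(ACE)+\Delta(CDE)$ yields $\Delta(ACE)=c(1+b)$, and analogously from $B$ one obtains $\Delta(BDE)=d(1+a)$. I would then split into two cases according to whether $a$ is above or below the threshold $(\sqrt{5}-1)/2$. In \emph{Case 1} ($a\le (\sqrt{5}-1)/2$), the constraint $c+d\le 1+a+ab$ from (\ref{cdineq}) gives $\Delta\le (1+2a)(1+b)$, so
\begin{equation*}
\Delta(ACD)=1+b\ge \frac{\Delta}{1+2a}\ge \frac{\Delta}{\sqrt{5}},
\end{equation*}
since $1+2a\le \sqrt{5}$ in this range.

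In \emph{Case 2} ($a>(\sqrt{5}-1)/2$, so $b\ge a>(\sqrt{5}-1)/2$ as well), I would prove the two-triangle inequality $\sqrt{5}\,(\Delta(ACD)+\Delta(BCE))\ge 2\Delta$, which by averaging yields $\max(\Delta(ACD),\Delta(BCE))\ge \Delta/\sqrt{5}$. The difference
\begin{equation*}
P:=\sqrt{5}\bigl(\Delta(ACD)+\Delta(BCE)\bigr)-2\Delta=(\sqrt{5}-2a)+(\sqrt{5}-2)b(1+a)+(\sqrt{5}b-2)c+(\sqrt{5}a-2)d
\end{equation*}
is linear in $(c,d)$, so its minimum over the feasibility polygon is attained at a corner in $\{(0,1),(1,0),(1,1)\}$. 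A case analysis on the signs of $\sqrt{5}a-2$ and $\sqrt{5}b-2$ reduces the verification to the two clean evaluations $P(0,1)=(\sqrt{5}-2)(1+a)(1+b)\ge 0$ (always) and $P(1,1)=(\sqrt{5}-2)(1+a)(1+b)+(\sqrt{5}b-2)$, whose non-negativity on $a,b\ge (\sqrt{5}-1)/2$ follows from the fact that it vanishes at $a=b=(\sqrt{5}-1)/2$ (the regular pentagon) and is monotonically increasing in both $a$ and $b$.

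The main obstacle is identifying the correct threshold: the value $a=(\sqrt{5}-1)/2$ is precisely where the Case 1 bound becomes tight and where the Case 2 combination attains equality, both at the regular pentagon. Recognizing that beyond this point no single triangle suffices, and that one must combine $\Delta(ACD)$ with $\Delta(BCE)$ and then exploit the linearity in $(c,d)$ to reduce to a finite check at corners, is the conceptual heart of the argument.
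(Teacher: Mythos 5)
Your proof is correct, and its main case takes a genuinely different route from the paper's. Both proofs share the setup, and your Case 1 is the paper's Case 1 in multiplicative dress: $\Delta\le(1+2a)(1+b)$ is just the paper's chain $\sqrt{5}(b+1)\ge a+b+(1+a+ab)+ab$ rearranged. But for $a\ge(\sqrt5-1)/2$ the paper argues pointwise: it splits into subcases according to whether $\Delta(ACD)\ge\Delta(BCE)$ or the reverse, feeds the comparison back in as a bound on $c+d$ (resp.\ on $d$), and still needs a third case $a\ge(\sqrt5+1)/2$ handled by the ear-adjacent triangle $\Delta(BCD)=ab+b$. Your averaged inequality $\sqrt5\bigl(\Delta(ACD)+\Delta(BCE)\bigr)\ge 2\Delta$, proved by linearity in $(c,d)$ and a corner check, covers the whole unbounded range $a\ge(\sqrt5-1)/2$ at once, so $\Delta(BCD)$ and the paper's Case 3 disappear entirely; I verified the algebra (the coefficients of $P$, the factorization $P(0,1)=(\sqrt5-2)(1+a)(1+b)$, the vanishing of $P(1,1)$ at $a=b=(\sqrt5-1)/2$ and its monotonicity, and that the corner $(1,0)$ is dominated because $a\le b$ forces $\sqrt5 a-2\le\sqrt5 b-2$). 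The price of your route is one extra ingredient the paper never invokes: your feasibility triangle with corners $(0,1),(1,0),(1,1)$ silently uses the constraint $c+d\ge 1$, and it is genuinely needed --- at, say, $a=b=2$ and $(c,d)$ near $(0,0)$ your $P$ is negative, so the averaged inequality would fail without it. Make it explicit: $c+d\ge 1$ is exactly the positivity of the ear $\Delta(DEA)=c+d-1$ from \eqref{laterlemma}, valid for every convex pentagon, so there is no gap, only a step to be stated. (The other constraint $c+d\le 1+a+ab$ from \eqref{cdineq} is harmless here: you only need the feasible set to be \emph{contained} in your triangle, and indeed it is slack in your Case 2 since $a+ab\ge a+a^2\ge 1$ for $a\ge(\sqrt5-1)/2$ and $b\ge a$.) Your auxiliary formulas $\Delta(ACE)=c(1+b)$ and $\Delta(BDE)=d(1+a)$ are correct but unused and can be dropped. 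In exchange for the extra convexity constraint and the small LP framing, you get a single two-variable verification whose unique equality point $a=b=(\sqrt5-1)/2$, $c=d=1$ is the (affinely) regular pentagon, which makes the extremal structure more visible than the paper's three-case analysis.
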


\begin{proof}
It is easy to verify that $f_5(3) \leq \frac{1}{\sqrt{5}}$ as
the case of the regular pentagon shows. It would be sufficient to
show that, with the notations above,
\begin{equation}\label{f53b}
\frac{\max\{\Delta(ACD),\Delta(BCE),\Delta(BCD)\}}{\Delta(ABCDE)}
\geq \frac{1}{\sqrt{5}}
\end{equation}

\emph{Case 1:} $a \leq \frac{\sqrt{5} - 1} {2}$.

In this case, we have
\begin{align*}
\frac{\Delta(ACD)}{\Delta(ABCDE)} \geq \frac{1}{\sqrt{5}}&
\stackrel{(\ref{mama})}{\iff} \sqrt{5} (b+1) \geq
a+b+(c+d)+ab \stackrel{(\ref{cdineq})}{\Longleftarrow}\\
&\Longleftarrow \sqrt{5} (b+1)\geq a+b+(1+a+ab)+ab \iff\\
&\iff(\sqrt{5} -1)(b+1) \geq 2a(1+b) \iff a \leq
\frac{\sqrt{5} -1}{2}.
\end{align*}

This settles Case 1.

\bigskip

\emph{Case 2:} $\frac{\sqrt{5}-1}{2} \leq a \leq \frac{\sqrt{5} +
1}{2}$.

\emph{Subcase 2(i):} $\Delta(ACD) \geq \Delta(BCE)$.

In this case, $b+1 \geq ab+ad+bc \stackrel{a\le b}{\Longrightarrow}
b+1 \geq ab+ad+ac \Longleftrightarrow c+d\leq\frac{b+1}{a} - b$.

\begin{align*}
\text{Then},\,\, \frac{\Delta(ACD)}{\Delta(ABCDE)} \geq \frac{1}{\sqrt{5}}
&\iff \sqrt{5}(b+1) \geq a+b+(c+d)+ab \Longleftarrow\\
&\Longleftarrow \sqrt{5}(b+1) \geq a+b+\left(\frac{b+1}{a}-b\right)+ab\iff\\
&\iff \sqrt{5}(b+1) \geq a(b+1) + \frac{b+1}{a}\iff\\
&\iff \sqrt{5}\geq a+\frac{1}{a}
\iff\frac{\sqrt{5}-1}{2} \leq a \leq \frac{\sqrt{5} +
1}{2}.
\end{align*}

\emph{Subcase 2(ii):} $\Delta(BCE) \geq \Delta(ACD)$.

In this case, $ab+ad+bc \geq 1+b$ which gives
\begin{equation}\label{subcase2b}
d \geq \frac{1+b-bc}{a}-b.
\end{equation}
\begin{align*}
\text{Then,}\,\,\frac{\Delta(BCE)}{\Delta(ABCDE)} \geq \frac{1}{\sqrt{5}}&
\iff \sqrt{5}(ab+ad+bc) \geq a+b+c+d+ab \iff\\
&\iff (\sqrt{5}-1)ab+d(\sqrt{5}a-1)+\sqrt{5}bc \geq a+b+c
\stackrel{(\ref{subcase2b})}{\Longleftarrow}\\
&\Longleftarrow(\sqrt{5}-1)ab+\left(\frac{1+b-bc}{a}-b\right)(\sqrt{5}a-1)+\sqrt{5}bc
\geq a+b+c\\
&\iff \sqrt{5}(1+b)-\frac{1+b}{a} \geq
a(1+b)+c\left(1-\frac{b}{a}\right) \stackrel{a\le b}{\Longleftarrow}\\
&\Longleftarrow \sqrt{5}(1+b) \geq a(1+b)+\frac{1+b}{a} \iff\\
&\iff \sqrt{5}\geq a+\frac{1}{a} \iff \frac{\sqrt{5}-1}{2} \leq
a \leq \frac{\sqrt{5} + 1}{2}.
\end{align*}
This ends the proof of Case 2.

\emph{Case 3:} $a \geq \frac{\sqrt{5} + 1}{2}$.

In this case we have
\begin{align*}
\frac{\Delta(BCD)}{\Delta(ABCDE)} \geq \frac{1}{\sqrt{5}}&
\iff \sqrt{5}(ab+b) \geq a+b+c+d+ab \stackrel{(\ref{cdineq})}{\Longleftarrow}\\
&\Longleftarrow \sqrt{5}b(a+1) \geq a+b+2+ab \iff
(\sqrt{5}-1)b(a+1)\ge a+2 \stackrel{a\le b}{\Longleftarrow}\\
&\Longleftarrow (\sqrt{5}-1)a(a+1)\geq a+2 \Longleftarrow a(a+1) \geq a+2
\iff\\
&\iff a^{2} \geq 2 \iff a \geq \sqrt{2}
\Longleftarrow a\geq \frac{\sqrt{5}+1}{2} > \sqrt{2}.
\end{align*}
This completes the proof of Theorem \ref{f53}.
\end{proof}

\section{\bf Setup for hexagons}

In this section we present proofs of two new results: $f_6(5)=5/6$
and $f_6(4)=2/3$.

Let $ABCDEF$ be an arbitrary convex hexagon. Suppose that the long
diagonals, $AD$, $BE$, and $CF$ are not concurrent. If these
diagonals do have a common point, then perturb the position of one
of the vertices by an arbitrarily small amount so that the diagonals
are not concurrent anymore. By continuity, any inequality which is
valid in the latter case is also valid in the former.
Let $M=AD\cap BE,\,N=AD\cap CF,\,$ and $P=CF\cap BE$. Denote
$\mathbf{u}=\overrightarrow{MN},
\mathbf{v}=\overrightarrow{MP}$ as shown in figure 2.

It follows that
$\overrightarrow{NP}=\mathbf{v}-\mathbf{u}$.
After an appropriate scaling we may assume that $\Delta(MNP)=
\mathbf{u}\wedge\mathbf{v} = 1$. Since $A,\,M,\,N,\,D$ are
collinear, $\overrightarrow{AM}=a\mathbf{u},
\overrightarrow{ND}=d\mathbf{u}$ with $a,\,d >0$.

Similarly,
$\overrightarrow{BM}=b\mathbf{v},
\overrightarrow{CN}=c(\mathbf{v}-\mathbf{u}),
\overrightarrow{PE}=e\mathbf{v},
\overrightarrow{PF}=f(\mathbf{v}-\mathbf{u})$ with
$b,\,c,\,e,\,f$ being positive constants. By the symmetry of the figure, we may assume that $a=\min \{a,\,b,\,c,\,d,\,e,\,f\}$.

\begin{tikzpicture}[line cap=round,line join=round,>=triangle 45,x=1.0cm,y=1.0cm,scale=2.3]\label{fig2}
\clip(2.7,3.373196063971163) rectangle (11.22571750762268,10.100829735319762);
\draw (6.306587511367731,5.6) node[anchor=north west] {\Large{$\mathbf{u}$}};
\draw [line width=2.pt] (4.6,4.4)-- (8.424202850376,4.366802955064267);
\draw [line width=2.pt] (4.,6.)-- (4.6,4.4);
\draw [line width=2.pt] (4.,6.)-- (4.6709579020016845,8.43816618841721);
\draw [line width=2.pt] (8.424202850376,4.366802955064267)-- (9.2,5.4);
\draw (5.627866335337738,6.538077495385155) node[anchor=north west] {\Large{$\mathbf{v}$}};
\draw (6.793818907903349,5.064147000183781) node[anchor=north west] {\Large{$c(\mathbf{v}-\mathbf{u})$}};
\draw (5.185315232809618,5.199784775815809) node[anchor=north west] {\Large{$b\mathbf{v}$}};
\draw [->,line width=2.pt] (8.424202850376,4.366802955064267) -- (8.794485156039729,4.85994034824646);
\draw [->,line width=2.pt] (4.,6.) -- (4.338110364428932,5.098372361522848);
\draw [->,line width=2.pt] (4.6709579020016845,8.43816618841721) -- (4.334869769120782,7.216869413353026);
\draw [->,line width=2.pt] (4.6,4.4) -- (6.512101425188,4.383401477532134);
\draw (4.4167011708947825,4.3) node[anchor=north west] {\Large{$B$}};
\draw (8.3,4.3) node[anchor=north west] {\Large{$C$}};
\draw (9.3,5.6) node[anchor=north west] {\Large{$D$}};
\draw (7.8,9.449768412286026) node[anchor=north west] {\Large{$E$}};
\draw (5.3,6.212546833868287) node[anchor=north west] {\Large{$M$}};
\draw (3.6,6.2) node[anchor=north west] {\Large{$A$}};
\draw [line width=2.pt] (4.6709579020016845,8.43816618841721)-- (7.976090584343801,9.033849711965273);
\draw [line width=2.pt] (9.2,5.4)-- (7.976090584343801,9.033849711965273);
\draw [line width=2.pt] (4.6,4.4)-- (7.976090584343801,9.033849711965273);
\draw [line width=2.pt] (4.6709579020016845,8.43816618841721)-- (8.424202850376,4.366802955064267);
\draw [line width=2.pt] (4.,6.)-- (9.2,5.4);
\draw [->,line width=2.pt] (5.628788669193889,5.812062845862244) -- (6.631873115138083,5.696322332868683);
\draw [->,line width=2.pt] (5.628788669193889,5.812062845862244) -- (6.109934109379255,6.472458532445908);
\draw (4.4167011708947825,8.898174791382445) node[anchor=north west] {\Large{$F$}};
\draw (6.6,6.5) node[anchor=north west] {\Large{$\mathbf{v}-\mathbf{u}$}};
\draw (6.1,7.2) node[anchor=north west] {\Large{$P$}};
\draw (7.2,6.013611429607979) node[anchor=north west] {\Large{$N$}};
\draw [->,line width=2.pt] (6.274653611265102,6.69854411791818) -- (7.068876069511733,7.7886533485322715);
\draw (7.85285815357287,5.9) node[anchor=north west] {\Large{$d\mathbf{u}$}};
\draw [->,line width=2.pt] (4.,6.) -- (5.112119177905328,5.87167855639554);
\draw (4.8,6.23) node[anchor=north west] {\Large{$a\mathbf{u}$}};
\draw (5.251274350787123,8.042329332932009) node[anchor=north west] {\Large{$f(\mathbf{v}-\mathbf{u})$}};
\draw (7.05,7.7) node[anchor=north west] {\Large{$e\mathbf{v}$}};
\draw [->,line width=2.pt] (7.266019752765538,5.623151566988591) -- (8.0878900225277,5.528320382016035);
\draw [->,line width=2.pt] (9.2,5.4) -- (8.665857875962951,6.985895311165562);
\draw [->,line width=2.pt] (7.976090584343801,9.033849711965273) -- (6.333611282795938,8.737825935695126);
\draw [->,line width=2.pt] (4.6,4.4) -- (5.140613067778788,5.142017918603423);
\draw [->,line width=2.pt] (6.274653611265102,6.69854411791818) -- (5.472805756633393,7.568355153167695);
\draw [->,line width=2.pt] (7.266019752765538,5.623151566988591) -- (6.77033668201532,6.160847842453386);
\draw [->,line width=2.pt] (8.424202850376,4.366802955064267) -- (7.845111301570769,4.994977261026429);
\draw (5.556058486203833,9.314130636653998) node[anchor=north west] {\Large{$\rotatebox{10.0}{$ -f\mathbf{u}+(f-e)\mathbf{v}$}$}};
\draw (8.5,5.389677661700649) node[anchor=north west] {\Large{$\rotatebox{56.0}{$(d-c)\mathbf{u}+c\mathbf{v}$}$}};
\draw (5.565101004579302,4.331703011770829) node[anchor=north west] {\Large{$\rotatebox{0.0}{ $ (c+1)\mathbf{u}+(b-c)\mathbf{v}$ }$}};
\draw (3.765107549991195,5.651910694589237) node[anchor=north west] {\Large{$\rotatebox{-70.0}{$a\mathbf{u}-b\mathbf{v}$}$}};
\draw (3.5932997008572905,8.542326060226053) node[anchor=north west] {\Large{$\rotatebox{75}{$(f-a)\mathbf{u}-(f+1)\mathbf{v} $}$}};
\draw (8.36,8.355623688854331) node[anchor=north west] {\Large{$\rotatebox{-72.0}{$ -(d+1)\mathbf{u}+(e+1)\mathbf{v}$}$}};
\begin{scriptsize}
\draw [fill=ududff] (4.,6.) circle (2pt);
\draw [fill=ududff] (4.6,4.4) circle (2pt);
\draw [fill=ududff] (8.424202850376,4.366802955064267) circle (2pt);
\draw [fill=ududff] (9.2,5.4) circle (2pt);
\draw [fill=ududff] (4.6709579020016845,8.43816618841721) circle (2pt);
\draw [fill=ududff] (7.976090584343801,9.033849711965273) circle (2pt);
\draw [fill=uuuuuu] (6.274653611265102,6.69854411791818) circle (2.0pt);
\draw [fill=uuuuuu] (5.628788669193889,5.812062845862244) circle (2.0pt);
\draw [fill=uuuuuu] (7.266019752765538,5.623151566988591) circle (2.0pt);
\end{scriptsize}
\end{tikzpicture}
\vspace{-2cm}
\begin{figure}[h!]
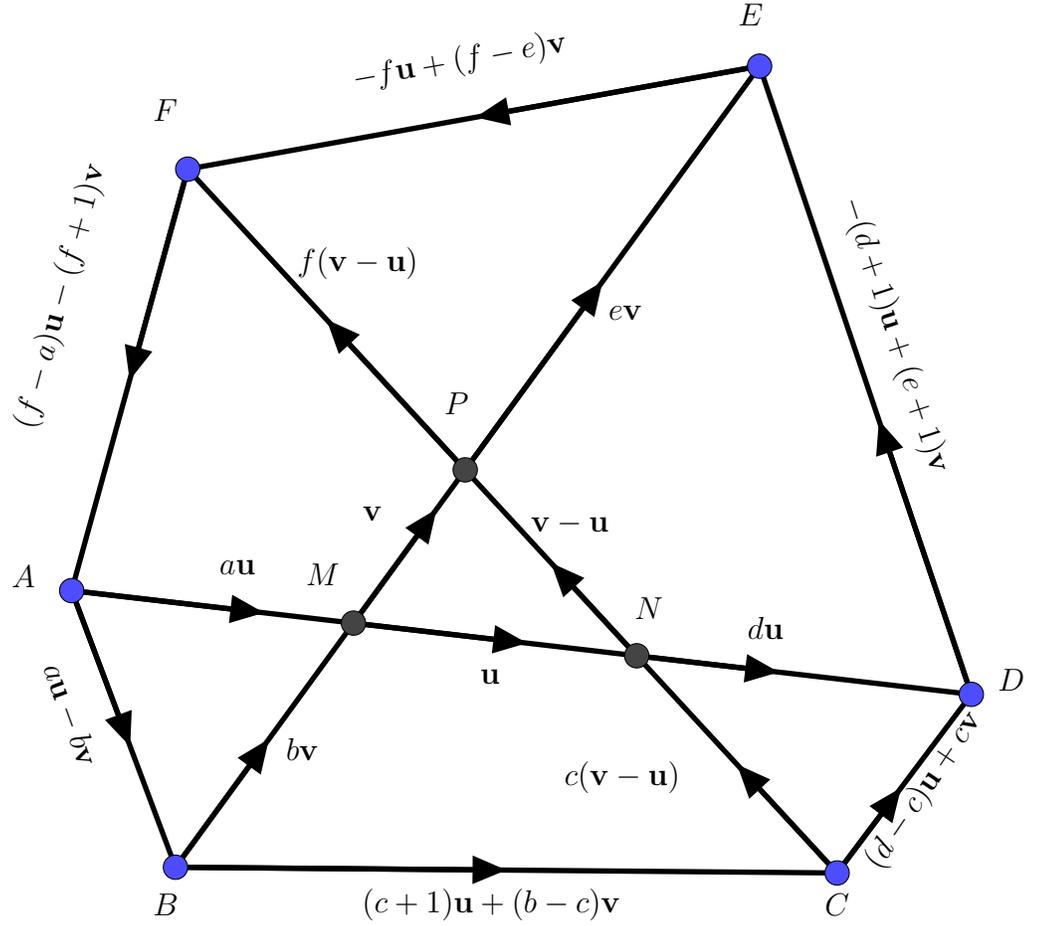

  \caption{The setup for hexagons}
\end{figure}

Using the outer product, we obtain the following area formulas.
\begin{eqnarray*}
\Delta(ANF)&=&(a+1)\mathbf{u}\wedge (f+1)(\mathbf{v}-\mathbf{u})=(a+1)(f+1).\\
\Delta(BPC)&=&(b+1)\mathbf{v}\wedge
(c+1)(\mathbf{v}-\mathbf{u})=(b+1)(c+1).\\
\Delta(DME)&=&(d+1)\mathbf{u}\wedge (e+1)(\mathbf{v})=(d+1)(e+1).\\
\Delta(AMB)&=& a\mathbf{u}\wedge b\mathbf{v}=ab.\\
\Delta(CND)&=& d\mathbf{v}\wedge c(\mathbf{v}-\mathbf{u})=cd.\\
\Delta(EPF)&=& e\mathbf{v}\wedge f(\mathbf{v}-\mathbf{u})=ef.
\end{eqnarray*}
We are now in position to compute the area of the hexagon $ABCDEF$.
\begin{eqnarray*}
\Delta(ABCDEF)&=&\Delta(ANF)+\Delta(BPC)+\Delta(DME)+\Delta(AMB)+\\
&+&\Delta(CND)+\Delta(EPF)-2\Delta(MNP)\qquad {\mbox{which implies
that}}
\end{eqnarray*}
\begin{equation}\label{hex}
\Delta(ABCDEF)=1+a+b+c+d+e+f+ab+bc+cd+de+ef+fa.
\end{equation}

\subsection{\bf Large Pentagons in Hexagons}
The following result is optimal. A lower bound of $f_6(5) \geq
\frac{10}{15-\sqrt{5}} \approx 0.78345\ldots$ had been given by Du
and Ding \cite{DD06}.

\begin{thm}\label{f65}
$f_6(5)=\frac{5}{6}.$
\end{thm}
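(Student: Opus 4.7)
The plan is to prove the lower bound $f_6(5) \ge \tfrac{5}{6}$ by showing that, in the setup of Section 3 with $a = \min\{a,b,c,d,e,f\}$, the ear $FAB$ at the ``smallest-parameter'' vertex already satisfies $\Delta(FAB) \le \tfrac{1}{6}\Delta(ABCDEF)$. This is enough because then the pentagon $BCDEF$ obtained by deleting $A$ has area $\Delta(ABCDEF) - \Delta(FAB) \ge \tfrac{5}{6}\Delta(ABCDEF)$. The matching upper bound $f_6(5) \le \tfrac{5}{6}$ is free from the regular hexagon: by sixfold symmetry every deleted-vertex pentagon there has area exactly $\tfrac{5}{6}$ of the hexagon.

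For the ear itself, I read off $\overrightarrow{FA} = (f-a)\mathbf{u} - (1+f)\mathbf{v}$ and $\overrightarrow{AB} = a\mathbf{u} - b\mathbf{v}$ from Figure 2 and compute the outer product (exactly as in \eqref{laterlemma}) to obtain $\Delta(FAB) = a + ab + af - bf$. Combining this with \eqref{hex}, the inequality $6\Delta(FAB) \le \Delta(ABCDEF)$ rearranges into the polynomial statement
\[
\Delta(ABCDEF) - 6\Delta(FAB) = 1 + (b + c + d + e + f - 5a) + (bc + cd + de + ef + fa - 5ab - 6af + 6bf) \ge 0.
\]
The constant $1$ is positive and the middle bracket is non-negative by minimality of $a$, so all the content is in showing that the third bracket is non-negative.

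To that end I use the bounds $bc \ge ab$ (from $c \ge a$), $cd, de, ef, fa \ge a^2$, and the key quadratic estimate $6bf \ge 6ab + 6af - 6a^2$ obtained by expanding $(b-a)(f-a) \ge 0$. Feeding these three ingredients into the third bracket reduces it to $2a(b-a) \ge 0$, yielding in fact $\Delta(ABCDEF) - 6\Delta(FAB) \ge 1 > 0$. The main obstacle is the non-adjacent product $bf$: neither $bf$ nor $af$ appears in the hexagon-area formula \eqref{hex}, so a purely term-by-term comparison with $\Delta(ABCDEF)$ fails. The identity $(b-a)(f-a) \ge 0$ is exactly what converts the awkward cross-term $6bf$ into a controllable combination of $ab$, $af$, and $a^2$, after which the rest of the estimate is routine. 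Because the strict gap of $1$ comes from the normalization $\Delta(MNP)=1$, the ratio $\tfrac{5}{6}$ is attained only in the limit where the three long diagonals become concurrent, i.e.\ the regular hexagon.
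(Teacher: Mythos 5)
Your proposal is correct and takes essentially the same route as the paper: both bound the ear at the minimal parameter $a$ via the identity $\Delta(FAB)=a(1+b+f)-bf$ of \eqref{fab} and show $\Delta(ABCDEF)-6\Delta(FAB)\ge 1>0$ under the normalization $\Delta(MNP)=1$ (then pass to concurrent-diagonal hexagons by continuity), the paper's substitution $b=a+x_1,\ldots,f=a+x_5$ being just a systematic packaging of your estimates $bc\ge ab$, $cd,de,ef,fa\ge a^2$, and $(b-a)(f-a)\ge 0$. Two asides in your write-up are slightly off but harmless: the term $af=fa$ \emph{does} occur in \eqref{hex}, and equality at ratio $5/6$ is attained not only by the regular hexagon but also by its affine images (affine maps preserve area ratios), though neither remark affects the validity of the argument.
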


\begin{proof}

It is easy to check that $f_6(5)\leq 5/6$, as the case of the
regular hexagon shows. The proof will be complete if we show that
${\Delta(BCDEF)}/{\Delta(ABCDEF)}\geq {5}/{6}$. Proving this is
equivalent to proving that ${\Delta(FAB)}/{\Delta(ABCDEF)}\leq
{1}/{6}$. From figure 2, we have that
$\overrightarrow{AB}=a\mathbf{u}-b\mathbf{v}$ and
$\overrightarrow{AF}=(f-a)\mathbf{u}-(1+f)\mathbf{v}$. It
follows that
\begin{align*}
\Delta(FAB)= \overrightarrow{AB}\wedge \overrightarrow{AF}=(a\mathbf{u}-b\mathbf{v})\wedge ((a-f)\mathbf{u}+(1+f)\mathbf{v}).
\end{align*}
After simplifying, we obtain
\begin{equation}\label{fab}
\Delta(FAB)=a(1+f)+b(a-f)=a(1+b+f)-bf.
\end{equation}
From the assumption that $a=\min\{a,\,b,\,c,\,d,\,e,\,f\}$ , it
follows that there exist nonnegative numbers $x_i,\, i=1,\ldots,\,5$
such that
\begin{equation*}
b=a+x_1,\,c=a+x_2,\,d=a+x_3,\,e=a+x_4,\,f=a+x_5.
\end{equation*}
Substituting the above equalities into (\ref{hex}) and (\ref{fab}),
we obtain that
\begin{align*}
\Delta(ABCDEF)-6\Delta(FAB)=\,1&+(2a+1)(x_1+x_2+x_3+x_4+x_5)+\\
&+x_1x_2+x_2x_3+x_3x_4+x_4x_5+6x_5x_1.
\end{align*}
Thus, for any hexagon with nonconcurrent long diagonals, $\Delta(ABCDEF)-6\Delta(FAB)>0$.
By continuity, the non-strict inequality holds for all convex hexagons.
This completes the proof of Theorem \ref{f65}.
\end{proof}

\subsection{\bf Large Quadrilaterals in Hexagons}
The following result is optimal. A lower bound of $f_6(5) \geq
\frac{1}{4-\sqrt{5}} \approx 0.566915\ldots$ had been proved by Du
and Ding \cite{DD08}.

\begin{thm}\label{f64}
$f_6(4)=\frac{2}{3}.$
\end{thm}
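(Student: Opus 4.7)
The plan is to verify $f_6(4)\le 2/3$ via the regular hexagon (whose inscribed quadrilateral $ACDE$ realizes the ratio exactly $2/3$) and then to prove the matching lower bound within the setup of this section under the standing assumption $a=\min\{a,b,c,d,e,f\}$.

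For the lower bound I will focus on the nine inscribed quadrilaterals obtained by removing two non-adjacent vertices, since for each such choice the complement splits cleanly as the sum of the two ears at the removed vertices; the goal is to exhibit one whose complement is at most $\tfrac{1}{3}\Delta(ABCDEF)$. Three natural candidates, each exploiting the smallness of $a$, are the opposite-removal quadrilateral $BCEF$ with complement $\Delta(FAB)+\Delta(CDE)$, the ``neighbors of $A$'' removal $ACDE$ with complement $\Delta(ABC)+\Delta(EFA)$, and the two one-between removals $BDEF$ and $BCDF$ with complements $\Delta(FAB)+\Delta(BCD)$ and $\Delta(FAB)+\Delta(DEF)$.

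Following the $f_6(5)$ proof, I would substitute $b=a+x_1,\dots,f=a+x_5$ with $x_i\ge 0$, so that each ear assumes the form
\begin{align*}
\Delta(FAB) &= a(1+a)-x_1x_5,\\
\Delta(ABC) &= a(1+a)+x_1(1+2a+x_2),
\end{align*}
and cyclically for the remaining four ears. Combined with the expansion $\Delta(ABCDEF)=1+6a+6a^2+(1+2a)(x_1+\dots+x_5)+(x_1x_2+x_2x_3+x_3x_4+x_4x_5)$ already used in Theorem \ref{f65}, each candidate difference $\Delta(ABCDEF)-3(\text{complement})$ becomes an explicit polynomial in $a$ and the $x_i$ whose nonnegativity I would try to certify.

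The main obstacle is that no single candidate works uniformly: $\Delta(ABCDEF)-3(\Delta(FAB)+\Delta(CDE))$ picks up a negative coefficient in $x_3$ when the antipodal deviation $d-a$ is large, while $\Delta(ABCDEF)-3(\Delta(ABC)+\Delta(EFA))$ is spoiled by large $x_1$ or $x_5$. I therefore expect a case split on the ``odd-vertex'' deviations $x_1,x_3,x_5$: when $x_3$ is small, $BCEF$ wins; when $x_1$ and $x_5$ are both small, $ACDE$ wins; the remaining mixed regimes should yield to $BDEF$ or $BCDF$, helped by the ear-positivity constraints $x_1x_5\le a(1+a)$, $x_2x_4\le a(1+a)+x_3(1+2a+x_2+x_4)$ and their cyclic cousins, which are forced by convexity of the hexagon and keep the negative cross-terms in check. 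As in Theorem \ref{f65}, the resulting estimate extends from hexagons with non-concurrent long diagonals to the full class of convex hexagons by continuity.
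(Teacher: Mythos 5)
Your frame is the paper's own: the same four candidate quadrilaterals ($Q_1=\Delta(BCDF)$, $Q_2=\Delta(ACDE)$, $Q_3=\Delta(BDEF)$, $Q_4=\Delta(BCEF)$), the same deviation substitution $b=a+x_1,\ldots,f=a+x_5$, and your ear formulas and your expansion of $\Delta(ABCDEF)$ are all correct, as is the regular-hexagon upper bound. But the lower bound is only a plan: the certification step, which is the entire content of the theorem, is left as ``I would try to certify,'' and the routing you do commit to is demonstrably wrong. Take $a\to 0^{+}$ and $(x_1,x_2,x_3,x_4,x_5)=(5,20,10,20,0)$. Every ear is positive --- in particular $\Delta(FAB)=a(1+a)-x_1x_5=a(1+a)>0$ and $\Delta(CDE)=a(1+a)+x_3(1+2a+x_2+x_4)-x_2x_4\approx 410-400>0$ --- so none of the convexity constraints you invoke excludes this configuration. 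Since $x_3$ and $x_1$ are both large, your trichotomy routes it to $BCDF$ or $BDEF$; but with $T_i:=3Q_i-2\Delta(ABCDEF)$ one computes (in the limit $a\to 0^{+}$; small positive $a$ only perturbs these) $T_1=-104$ and $T_3=-254$, while $T_2=241$ and $T_4=526$. The winning quadrilaterals are exactly the two your plan excluded on the grounds that $x_1$, respectively $x_3$, is ``large.'' The moral: absolute sizes of $x_1,x_3,x_5$, even tempered by the bilinear ear constraints, do not determine the winner; the \emph{relative ranking} of the five deviations does --- here $x_5\le x_1\le \min(x_2,x_3,x_4)$, i.e.\ what matters is that $f$ is second smallest.

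These are precisely the two ideas the paper's proof supplies and yours lacks. First, the case split is on which of $b,c,d,e,f$ is second smallest (refined by the third smallest when $b$ or $f$ is second), with a telescoping substitution adapted to that ordering. Second --- the decisive trick --- one never certifies a single candidate per region, but a fixed combination of deficits: $T_2+T_3+T_4$, $T_1+T_3+T_4$, $T_1+2T_3$, $2T_2+T_4$, and so on, each of which in the adapted coordinates collapses to $3$ plus a polynomial with manifestly nonnegative coefficients, whence pigeonhole hands you one nonnegative $T_i$. This averaging kills every negative cross term identically, so no ear-positivity constraints are needed anywhere, whereas your single-candidate scheme provably needs them and, as the example shows, misroutes even with them. (Your configuration falls under the paper's Subcase 5(i), $a\le f\le b\le\min\{c,d,e\}$, where $2T_2+T_4\ge 3$; indeed $2\cdot 241+526=1008$.) To salvage your plan, replace the size-based trichotomy on $x_1,x_3,x_5$ by the order-based case split and aim at nonnegative combinations of the $T_i$ rather than individual candidates; your concluding continuity reduction is fine as stated.
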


\begin{proof}
It is easy to check that $f_6(4)\leq 2/3$, as the case of the
regular hexagon shows. It remains to be shown that $f_6(4) \geq
2/3$. Let us define $Q_1:=\Delta(BCDF)$, $Q_2:=\Delta(ACDE)$,
$Q_3:=\Delta(BDEF)$, $Q_4:=\Delta(BCEF)$, and $H:=\Delta(ABCDEF)$.
We will show that at least one of $Q_1/H$, ${Q_2}/{H}$, ${Q_3}/{H}$,
and ${Q_4}/{H}$ is greater than or equal to $2/3$.
To this end, we will show that at least one of $3Q_1-2H$,
$3Q_2-2H$, $3Q_3-2H$, and $3Q_4-2H$ is nonnegative.

First we will compute the areas of all six ears of the hexagon $ABCDEF$.
\begin{align*}
\Delta(ABC)&= \overrightarrow{AB}\wedge \overrightarrow{BC}= (a\mathbf{u}-b\mathbf{v})\wedge
((c+1)\mathbf{u}+(b-c)\mathbf{v})=b(a+c+1)-ac,\\
\Delta(BCD)&=\overrightarrow{BC}\wedge \overrightarrow{CD}=((c+1)\mathbf{u}+(b-c)\mathbf{v})\wedge((-c-d)\mathbf{u}+c\mathbf{v})=c(b+d+1)-bd,\\
\Delta(CDE)&=\overrightarrow{CD}\wedge \overrightarrow{DE}=((-c-d)\mathbf{u}+c\mathbf{v})\wedge
((1-d)\mathbf{u}+(e-1)\mathbf{v})=d(c+e+1)-ce,\\
\Delta(DEF)&=\overrightarrow{DE}\wedge \overrightarrow{EF}=((1-d)\mathbf{u}+(e-1)\mathbf{v})\wedge (-f\mathbf{u}+(f-e)\mathbf{v})=e(d+f+1)-df,\\
\Delta(EFA)&=\overrightarrow{EF}\wedge \overrightarrow{FA}=(-f\mathbf{u}+(f-e)\mathbf{v})\wedge
((f-a)\mathbf{u}+(-f-1)\mathbf{v})=f(e+a+1)-ea,\\
\Delta(FAB)&=\overrightarrow{FA} \wedge \overrightarrow{AB}=((f-a)\mathbf{u}+(-f-1)\mathbf{v})\wedge (a\mathbf{u} - b\mathbf{v})= a(f+b+1)-fb.
\end{align*}
With this information, we now compute $Q_1$, $Q_2$, $Q_3$, and $Q_4$
in terms of $a$, $b$, $c$, $d$, $e$, and $f$.
\begin{eqnarray}
\label{qq1}Q_1=H-\Delta(FAB)-\Delta(DEF)=1+b+c+d+f+bc+cd+bf+df.\\
\label{qq2}Q_2=H-\Delta(ABC)-\Delta(EFA)=1+c+d+e+a+cd+de+ca+ea.\\
\label{qq3}Q_3=H-\Delta(FAB)-\Delta(BCD)=1+d+e+f+b+de+ef+db+fb.\\
\label{qq4}Q_4=H-\Delta(FAB)-\Delta(CDE)=1+b+c+e+f+bc+ef+bf+ce.
\end{eqnarray}
Let us define $T_1:=3Q_1-2H$, $T_2:=3Q_2-2H$, $T_3:=3Q_3-2H$,
$T_4:=3Q_4-2H$. We need to prove that at least one of $T_1$, $T_2$,
$T_3$, and $T_4$ is nonnegative. We split the proof into five different cases, depending on the value
of the second smallest element of $\{a,b,c,d,e,f\}$.


\noindent{\emph {Case 1:}} $a \leq c \leq \min\{b,d,e,f\}$.

\noindent It follows that there exist nonnegative numbers
$x_i,\,i=1,\ldots,\,5$ such that
\begin{equation*}
c=a+x_1, b=a+x_1+x_2, d=a+x_1+x_3, e=a+x_1+x_4, f=a+x_1+x_5.
\end{equation*}
Substituting the above equalities into (\ref{hex}), (\ref{qq2}), (\ref{qq3}), and (\ref{qq4}),
we obtain that
\begin{equation*}
T_2+T_3+T_4=3+(3+6a+3x_1)(x_1+x_4)+3x_1^2+6x_1(x_2+x_5)+3x_2(x_3+2x_5),
\end{equation*}
which is clearly nonnegative. Therefore, at least one of the numbers $T_2$, $T_3$, $T_4$ is nonnegative.


\noindent{\emph {Case 2:}} $a \leq d \leq \min\{b,c,e,f\}$.

\noindent It follows that there exist nonnegative numbers
$x_i,\,i=1,\ldots,\,5$ such that
\begin{equation*}
d=a+x_1, b=a+x_1+x_2, c=a+x_1+x_3, e=a+x_1+x_4, f=a+x_1+x_5.
\end{equation*}
Substituting the above equalities into (\ref{hex}), (\ref{qq1}), (\ref{qq3}), and (\ref{qq4}),
we obtain that
\begin{equation*}
T_1+T_3+T_4=3+(3+6a+6x_1)(2x_1+x_2+x_5)+6x_1(x_2+x_5)+9x_2x_5+3x_3x_4,
\end{equation*}
which is obviously nonnegative. Hence, at least one of the numbers $T_1$, $T_3$, $T_4$ is nonnegative.


\noindent{\emph{Case 3:}} $a \leq e \leq \min\{b,c,d,f\}$.

\noindent In this case there exist nonnegative numbers
$x_i,\,i=1,\ldots,\,5$ such that
\begin{equation*}
e=a+x_1, b=a+x_1+x_2, c=a+x_1+x_3, d=a+x_1+x_4, f=a+x_1+x_5.
\end{equation*}
Substituting the above equalities into (\ref{hex}), (\ref{qq1}), (\ref{qq2}), and (\ref{qq3}),
we obtain that
\begin{equation*}
T_1+T_2+T_4=3+(3+6a+3x_1)(x_1+x_3)+3x_1^2+6x_1(x_2+x_5)+3x_5(2x_2+x_4)
\end{equation*}
which is nonnegative. It follows that at least one of the numbers $T_1$, $T_2$, $T_4$ is nonnegative.


\noindent{\emph{Case 4:}} $a \leq b \leq \min\{c,d,e,f\}$. We investigate four separate subcases.


\noindent{\emph{Subcase 4(i):}} $a \leq b \leq c \leq \min\{d, e, f\}$.

\noindent In this case there exist nonnegative numbers
$x_i,\,i=1,\ldots,\,5$ such that
\begin{equation*}
b=a+x_1, c=a+x_1+x_2, d=a+x_1+x_2+x_3, e=a+x_1+x_2+x_4,
f=a+x_1+x_2+x_5.
\end{equation*}
Substituting the above equalities into (\ref{hex}), (\ref{qq1}), and  (\ref{qq3}), we obtain that
\begin{equation*}
T_1+2T_3=3+(3+6a+6x_1)(2x_1+x_2+x_3+x_5)+6x_1(x_2+x_5)+3x_5(x_2+x_3),
\end{equation*}
which is nonnegative. Thus, at least one of $T_1$ and $T_3$ must be nonnegative.


\noindent{\emph{Subcase 4(ii):}} $a \leq b \leq d \leq \min\{c, e, f\}$.

\noindent In this case there exist nonnegative numbers $x_i, i=1,\ldots,\,5$ such that
\begin{equation*}
b=a+x_1, d=a+x_1+x_2, c=a+x_1+x_2+x_3, e=a+x_1+x_2+x_4,
f=a+x_1+x_2+x_5.
\end{equation*}
Substituting the above equalities into (\ref{hex}), (\ref{qq1}), (\ref{qq3}), and  (\ref{qq4}), we obtain that
\begin{equation*}
T_1+T_3+T_4=3+(3+6a+6x_1)(2x_1+x_2+x_5)+6x_1(x_2+x_5)+3x_2x_5+3x_3x_4
\end{equation*}
which is nonnegative. Hence, at least one of $T_1$, $T_3$, and $T_4$ must be
nonnegative.


\noindent{\emph{Subcase 4(iii):}} $a \leq b \leq e \leq \min\{c, d, f\}$.

\noindent In this case there exist nonnegative numbers
$x_i,\,i=1,\ldots,\,5$ such that
\begin{equation*}
b=a+x_1, e=a+x_1+x_2, c=a+x_1+x_2+x_3, d=a+x_1+x_2+x_4,
f=a+x_1+x_2+x_5.
\end{equation*}
Substituting the above equalities into (\ref{hex}), (\ref{qq1}), (\ref{qq2}), and  (\ref{qq4}), we obtain that
\begin{equation*}
T_1+T_2+T_4=3+(3+6a+3x_1)(x_1+x_2+x_3)+3x_1(x_1+x_2+2x_5)+3x_2x_3+3x_4x_5,
\end{equation*}
which is clearly nonnegative. It follows that at least one of $T_1$, $T_2$, and $T_4$ must be nonnegative.


\noindent{\emph{Subcase 4(iv):}} $a \leq b \leq f \leq \min\{c, d, e\}$.

\noindent In this case there exist nonnegative numbers $x_i, i=1
\ldots 5$ such that
\begin{equation*}
b=a+x_1, f=a+x_1+x_2, c=a+x_1+x_2+x_3, d=a+x_1+x_2+x_4,
e=a+x_1+x_2+x_5.
\end{equation*}
Substituting the above equalities into (\ref{hex}), (\ref{qq2}), and (\ref{qq4}) we obtain that
\begin{equation*}
2T_2+T_4=3+(3+6a)(x_2+x_3+x_5)+3x_3(x_2+x_5),
\end{equation*}
which is a sum of nonnegative numbers, hence, at least one of $T_2$ and $T_4$ must be nonnegative.
This completes the proof of Case 4.


\noindent{\emph{Case 5:}} $a \leq f \leq \min\{b,c,d,e\}$. Again, we study four different subcases.


\noindent{\emph{Subcase 5(i):}} $a \leq f \leq b \leq \min\{c, d, e\}$.

\noindent It follows that there exist nonnegative numbers
$x_i,\,i=1,\ldots,\,5$ such that
\begin{equation*}
f=a+x_1, b=a+x_1+x_2, c=a+x_1+x_2+x_3, d=a+x_1+x_2+x_4,
e=a+x_1+x_2+x_5.
\end{equation*}
Substituting the above equalities into (\ref{hex}), (\ref{qq2}), and (\ref{qq4}), we obtain that
\begin{equation*}
2T_2+T_4=3+(3+6a)(x_2+x_3+x_5)+3x_5(x_2+x_3),
\end{equation*}
which is certainly nonnegative numbers. It follows that either $T_2$ or $T_4$ is nonnegative.


\noindent{\emph{Subcase 5(ii)}:} $a \leq f \leq c \leq \min\{b, d, e\}$.

\noindent In this case there exist nonnegative numbers
$x_i,\,i=1,\ldots,\,5$ such that
\begin{equation*}
f=a+x_1, c=a+x_1+x_2, b=a+x_1+x_2+x_3, d=a+x_1+x_2+x_4,
e=a+x_1+x_2+x_5.
\end{equation*}
Substituting the above equalities into (\ref{hex}), (\ref{qq2}), (\ref{qq3}), and (\ref{qq4}), we obtain that
\begin{equation*}
T_2+T_3+T_4=3+(3+6a+3x_1)(x_1+x_2+x_5)+3x_1(x_1+x_2+2x_3)+3x_2x_5+3x_3x_4,
\end{equation*}
which is nonnegative. Hence, at least one of $T_2$, $T_3$, $T_4$ must be nonnegative.


\noindent{\emph{Subcase 5(iii):}} $a \leq f \leq d \leq \min\{b, c, e\}$.

\noindent It follows that there exist nonnegative numbers
$x_i,\,i=1,\ldots,\,5$ such that
\begin{equation*}
f=a+x_1, d=a+x_1+x_2, b=a+x_1+x_2+x_3, c=a+x_1+x_2+x_4,
e=a+x_1+x_2+x_5.
\end{equation*}
Substituting the above equalities into (\ref{hex}), (\ref{qq1}), (\ref{qq3}), and (\ref{qq4}) we obtain that
\begin{equation*}
T_1+T_3+T_4=3+(3+6a+6x_1)(2x_1+x_2+x_3)+6x_1(x_2+x_3)+3x_2x_3+3x_4x_5,
\end{equation*}
which is nonnegative. Hence, at least one of $T_1$, $T_3$, $T_4$ must be nonnegative.


\noindent{\emph{Subcase 5(iv):}} $a \leq f \leq e \leq \min\{b, c, d\}$

\noindent In this case there exist nonnegative numbers
$x_i,\,i=1,\ldots,\,5$ such that
\begin{equation*}
f=a+x_1, e=a+x_1+x_2, b=a+x_1+x_2+x_3, c=a+x_1+x_2+x_4,
d=a+x_1+x_2+x_5.
\end{equation*}
Substituting the above equalities into (\ref{hex}), (\ref{qq1}), and (\ref{qq3}) we obtain that
\begin{equation*}
2T_1+T_3=3+(3+6a+6x_1)(2x_1+x_2+x_3+x_5)+6x_1(x_2+x_3)+3x_3(x_2+x_5),
\end{equation*}
which is nonnegative, therefore, at least one of $T_1$ and $T_3$ must be nonnegative, which completes the analysis of Case 5, and with it, the proof of Theorem \ref{f64}.
\end{proof}

\section{\bf A Closer Look at $f_n (n-1)$}

As previously noticed, finding the largest $(n-1)$-gon contained in a
given convex $n$-gon $A_1 A_2 \ldots A_n$ is equivalent to finding
the smallest ear of the original $n$-gon. Define
\begin{equation}\label{g}
g_n:=\max \min_{1\leq k \leq n} \frac{\Delta(A_{k-1} A_k
A_{k+1})}{\Delta(A_1 A_2 \ldots A_n)}
\end{equation}
where the maximum is taken over all convex $n$-gons $A_1 A_2 \cdots
A_n$. Clearly, $g_n=1-f_n(n-1)$. Theorems (\ref{f54}) and
(\ref{f65}) can therefore be restated as $g_5 = (5-\sqrt{5})/{10}$
and $g_6 = 1/6$. In general,
\begin{equation}\label{lowerg}
g_n \geq \frac{4}{n} \cdot \sin^{2}\left(\frac{\pi}{n}\right)
\end{equation}
as it can easily be seen by checking the case when the $n$-gon is
regular. In this section we provide two upper bounds for $g_n$, the second of which shows that the order of magnitude of
the lower bound in (\ref{lowerg}) is the correct one.

We will need the following simple

\begin{lemma}\label{lemmadufengtan}
Let $ABCDE$ be a convex pentagon in which
\begin{equation*}
    \Delta(DEA) \leq \min(\Delta(CDE),\Delta(EAB))
\end{equation*} Then
\begin{equation*}
    \Delta(DEA) \leq \min(\Delta(ABD), \Delta(ACD))
\end{equation*}
\end{lemma}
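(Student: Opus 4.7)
The plan is to apply the coordinate setup from Section 2 directly. Even though the WLOG assumption (\ref{ABC}) required $\Delta(DEA)$ to be the \emph{smallest} ear among all five, the coordinate construction itself---placing $O = AC \cap BD$, setting $\mathbf{u}=\overrightarrow{OD}$, $\mathbf{v}=\overrightarrow{OA}$ scaled so that $\mathbf{u}\wedge\mathbf{v}=1$, and writing $\overrightarrow{OB}=-a\mathbf{u}$, $\overrightarrow{OC}=-b\mathbf{v}$, $\overrightarrow{OE}=c\mathbf{u}+d\mathbf{v}$ with $a,b,c,d>0$---is valid for any convex pentagon $ABCDE$. Hence the area formulas (\ref{laterlemma}) and (\ref{mama}) are available without alteration.

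First I would translate the two hypotheses into constraints on $c$ and $d$. The chain of equivalences displayed just before (\ref{cdineq}) shows that $\Delta(DEA)\le\Delta(CDE)$ is equivalent to $(c-1)(1+b)\le 0$, which forces $c\le 1$ since $b>0$. Similarly $\Delta(DEA)\le\Delta(EAB)$ gives $d\le 1$. In particular $c+d\le 2$.

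Next, using (\ref{mama}) we have $\Delta(ABD)=a+1$ and $\Delta(ACD)=b+1$, while $\Delta(DEA)=c+d-1$. Hence
\begin{align*}
\Delta(ABD)-\Delta(DEA) &= a+2-(c+d) \;\ge\; a \;>\; 0,\\
\Delta(ACD)-\Delta(DEA) &= b+2-(c+d) \;\ge\; b \;>\; 0,
\end{align*}
which yields both required inequalities.

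There is essentially no obstacle: the lemma reduces to this one-line computation once the Section 2 framework is in place. The only point worth noting is that the two hypotheses here, while weaker than the full minimality assumption (\ref{ABC}), are \emph{exactly} the two inequalities used in (\ref{cdineq}) to deduce $c\le 1$ and $d\le 1$; the third condition $c+d\le 1+a+ab$ appearing in (\ref{cdineq}) is not needed.
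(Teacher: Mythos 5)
Your proof is correct and follows essentially the same route as the paper: both translate the two hypotheses via the Section~2 coordinates into $c\le 1$ and $d\le 1$, then compare $\Delta(DEA)=c+d-1$ against $\Delta(ABD)=1+a$ and $\Delta(ACD)=1+b$ (the paper phrases the last step as $c+d-1\le 1<\min(1+a,1+b)$, which is your difference computation in disguise). Your explicit remark that the coordinate setup and formulas \eqref{laterlemma}, \eqref{mama} remain valid without the full minimality assumption \eqref{ABC} is a point the paper leaves implicit, and it is a worthwhile clarification.
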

\begin{proof}
Using equalities \eqref{laterlemma}, we have $c+d-1 \le \min(b+d-bc, a+c-ad)$ which imply that $c\le 1$ and $d\le 1$, that is, $c+d-1\le 1$. On the other hand, by \eqref{mama}, $\min(\Delta(ABD), \Delta(ACD))=\min(1+a, 1+b)>1$. The conclusion follows.
\end{proof}

The following theorem was proved by Du, Feng, and Tan \cite{dft}. We present a different argument below.
\begin{thm}\cite{dft}
For all $n\ge 4$ we have
\begin{equation}\label{gninequality}
g_{n+1}\le \frac{g_n}{1+g_n}.
\end{equation}
\end{thm}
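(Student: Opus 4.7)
The plan is to pass from an $(n+1)$-gon to an $n$-gon by deleting the apex of its smallest ear, and then apply the defining bound of $g_n$ to the resulting polygon. Lemma \ref{lemmadufengtan} will be exactly what is needed to guarantee that this deletion does not produce an even smaller ear at one of the two vertices adjacent to the deleted apex.

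Concretely, fix an arbitrary convex $(n+1)$-gon $P = A_1 A_2 \ldots A_{n+1}$ and let $T := \Delta(A_{k-1} A_k A_{k+1})$ be its smallest ear; set $s := T/\Delta(P)$. Consider the convex sub-pentagon $A_{k-2} A_{k-1} A_k A_{k+1} A_{k+2}$ and identify its vertices with those of the pentagon $ABCDE$ in Lemma \ref{lemmadufengtan} via $E \leftrightarrow A_k$, $D \leftrightarrow A_{k-1}$, $A \leftrightarrow A_{k+1}$, $C \leftrightarrow A_{k-2}$, $B \leftrightarrow A_{k+2}$. Under this identification, the hypothesis $\Delta(DEA) \le \min(\Delta(CDE),\Delta(EAB))$ reads
\[
T \;\le\; \min\bigl(\Delta(A_{k-2} A_{k-1} A_k),\,\Delta(A_k A_{k+1} A_{k+2})\bigr),
\]
which holds because $T$ is by choice the minimum ear of $P$. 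The lemma therefore yields
\[
T \;\le\; \min\bigl(\Delta(A_{k-2} A_{k-1} A_{k+1}),\,\Delta(A_{k-1} A_{k+1} A_{k+2})\bigr).
\]

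Now delete $A_k$ to obtain a convex $n$-gon $P'$ with $\Delta(P') = \Delta(P) - T$. Each ear of $P'$ is either an ear of $P$ not incident to $A_k$, hence of area at least $T$ by minimality of $T$, or one of the two new ears at the former neighbors of $A_k$, namely $\Delta(A_{k-2} A_{k-1} A_{k+1})$ and $\Delta(A_{k-1} A_{k+1} A_{k+2})$, which are also bounded below by $T$ in view of the previous display. Consequently the smallest ear of $P'$ has area at least $T$, and the definition of $g_n$ applied to $P'$ gives
\[
T \;\le\; g_n \cdot \Delta(P') \;=\; g_n\bigl(\Delta(P) - T\bigr).
\]
Rearranging yields $s = T/\Delta(P) \le g_n/(1+g_n)$. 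Since $P$ was arbitrary, taking the supremum of $s$ over all convex $(n+1)$-gons proves $g_{n+1} \le g_n/(1+g_n)$.

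The only real obstacle is the bookkeeping in the vertex identification: one must match the five consecutive vertices $A_{k-2},\ldots,A_{k+2}$ to $A,B,C,D,E$ in the correct cyclic order so that the designated ear $DEA$ is precisely the smallest ear of $P$ and the two ears $CDE$, $EAB$ are the neighbors sharing an edge with it. Once that is done, Lemma \ref{lemmadufengtan} performs the essential geometric work of bounding the two newly created ears of $P'$ from below by the old minimum $T$, and the rest is a one-line algebraic manipulation.
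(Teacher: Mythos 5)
Your proof is correct and follows essentially the same route as the paper's: delete the apex of the smallest ear, invoke Lemma \ref{lemmadufengtan} (via the same cyclic identification of five consecutive vertices, so that the two hypothesis ears are ears of the original polygon and the two conclusion triangles are exactly the newly created ears), and finish with the same algebraic rearrangement. The only cosmetic difference is that you argue directly with an arbitrary $(n+1)$-gon and the definition of $g_n$ and then take a supremum, whereas the paper phrases the identical argument through an extremal polygon for $f_{n+1}(n)$ and the largest $(n-1)$-gon contained in the resulting $n$-gon $Q$.
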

\begin{proof}
Let $P=A_1A_2\ldots A_nA_{n+1}$ be an extremal convex $(n+1)$-gon for which
\begin{equation}
\frac{\max(\Delta(Q): Q\,\, \text{is an}\, n-\text{gon contained in}\,\, P)}{\Delta(P)}=f_{n+1}(n)=1-g_{n+1}.
\end{equation}
Without loss of generality we can assume that $Q=A_1A_2\ldots A_n$, that is, $A_nA_{n+1}A_1$ is the triangle of smallest area determined by three consecutive vertices of $P$. Denote $\Delta(A_nA_{n+1}A_1)=\alpha$.
Then,
\begin{equation*}
1-g_{n+1}=f_{n+1}(n)=\frac{\Delta(Q)}{\Delta(P)}=\frac{\Delta(Q)}{\alpha+\Delta(Q)}\implies \frac{\alpha}{\Delta(Q)}=\frac{g_{n+1}}{1-g_{n+1}}.
\end{equation*}
Let $R$ be the largest area $(n-1)$-gon contained in $Q$.
Clearly, $R$ is obtained by removing from $Q$ one of the following $n$ triangles: $A_1A_2A_3,A_2A_3A_4, \ldots, A_{n-2}A_{n-1}A_n,A_{n-1}A_nA_1$, or $A_nA_1A_2$.

Each of the first $n-2$ triangles has area $\ge \alpha$; this is because these triangles are ears of $P$ and we assumed that $A_{n}A_{n+1}A_1$ is the smallest area ear of $P$.

In particular, $\alpha=\Delta(A_nA_{n+1}A_1)\le \min(\Delta(A_{n-1}A_nA_{n+1}),\Delta(A_{n+1}A_1A_2))$.

Using Lemma \ref{lemmadufengtan} for the convex pentagon $ABCDE=A_1A_2A_{n-1}A_nA_{n+1}$ it follows that
\begin{equation*}
\alpha=\Delta(A_nA_{n+1}A_1)\le \min(\Delta(A_{n-1}A_nA_1),\Delta(A_nA_1A_2))
\end{equation*}
This proves that each of the triangles $A_1A_2A_3,A_2A_3A_4, \ldots, A_{n-2}A_{n-1}A_n,A_{n-1}A_nA_1,A_nA_1A_2$ have area $\ge \alpha$. It follows that $\Delta(R) \le \Delta(Q) - \alpha$ from which

\begin{equation*}
1-g_n=f_n(n-1)\le \frac{\Delta(R)}{\Delta(Q)}\le 1-\frac{\alpha}{\Delta(Q)}=1-\frac{g_{n+1}}{1-g_{n+1}}.
\end{equation*}
Comparing the first and last terms in the inequality chain above implies \eqref{gninequality}, as claimed.
\end{proof}

\begin{cor}
\begin{align}
&\text{(a) For every}\,\, n\ge 6,  g_n=1-f_n(n-1) \le \frac{1}{n}. \label{uppergn}\\
&\text{(b) For every}\,\, m, n\,\, \text{such that}\,\, 5\le m <n, f_n(m)\ge \frac{m}{n}.\label{lowerfmn}
\end{align}
\end{cor}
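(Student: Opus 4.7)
The plan is to deduce both parts from the recurrence \eqref{gninequality} combined with the already-established value $g_6 = 1/6$ (which is just Theorem~\ref{f65} restated in terms of $g_n$).

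For part (a), the argument I would give is induction on $n$, with base case $n = 6$ where $g_6 = 1/6$. For the inductive step, assuming $g_n \le 1/n$, the key observation is that the map $x \mapsto x/(1+x) = 1 - 1/(1+x)$ is strictly increasing on $[0,\infty)$, so \eqref{gninequality} gives
\[
g_{n+1} \;\le\; \frac{g_n}{1+g_n} \;\le\; \frac{1/n}{1+1/n} \;=\; \frac{1}{n+1}.
\]

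For part (b), I would use an iterated-shaving argument. Fix an arbitrary convex $n$-gon $P_n$. For each $k = n, n-1, \ldots, m+1$, take $P_{k-1}$ to be the largest-area convex $(k-1)$-gon contained in $P_k$ (equivalently, $P_k$ with its smallest ear removed). By the definition of $f_k(k-1)$ applied to the $k$-gon $P_k$, and by part (a) --- which applies because $k \ge m+1 \ge 6$ throughout the iteration --- we have
\[
\frac{\Delta(P_{k-1})}{\Delta(P_k)} \;\ge\; f_k(k-1) \;=\; 1 - g_k \;\ge\; \frac{k-1}{k}.
\]
Telescoping the product from $k = n$ down to $k = m+1$ yields $\Delta(P_m)/\Delta(P_n) \ge m/n$. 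Since $P_m$ is a convex $m$-gon inscribed in $P_n$, this forces $\max_{Q \subset P_n,\,|Q|=m} \Delta(Q)/\Delta(P_n) \ge m/n$; taking the minimum over all $P_n$ delivers $f_n(m) \ge m/n$.

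No step here strikes me as a real obstacle --- both arguments are essentially bookkeeping once \eqref{gninequality} and the value $g_6 = 1/6$ are in hand. The only point deserving attention is the role of the hypothesis $m \ge 5$: it is precisely what forces the smallest index appearing in the telescoping to be $m+1 \ge 6$, so that part (a) is available at every step. For $m \in \{3,4\}$ one would instead need bounds of the form $f_4(3) \ge 3/4$ or $f_5(4) \ge 4/5$, both of which fail (the known values are $1/2$ and $(5+\sqrt{5})/10$), so the restriction $m \ge 5$ is essential to the argument and not merely convenient.
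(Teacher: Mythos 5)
Your proof is correct and takes essentially the same route as the paper's: part (a) by induction from the base case $g_6=1/6$ using the monotone recurrence $g_{n+1}\le g_n/(1+g_n)$, and part (b) by the same ear-removal telescoping $\Delta(P_m)/\Delta(P_n)=\prod_{k=m}^{n-1}\Delta(P_k)/\Delta(P_{k+1})\ge\prod_{k=m}^{n-1}\frac{k}{k+1}=\frac{m}{n}$, with the minor (harmless) variation that you start from an arbitrary $n$-gon and minimize at the end rather than fixing an extremal one. Your closing observation that $m\ge 5$ is genuinely necessary --- since $f_5(4)=(5+\sqrt{5})/10<4/5$ and $f_4(3)=1/2<3/4$ would break the telescoping at its last step --- is accurate and correctly identifies why the paper imposes that hypothesis.
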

\begin{proof}
We proved in Theorem \ref{f65} then $f_6(5)=5/6$, so the first statement is true for $n=6$. Using inequality \eqref{gninequality}, it follows by induction that $g_n\le 1/n$, as claimed. In particular, $f_n(n-1)\ge (n-1)/n$ for all $n\ge 6$.

For proving the second part, given $5\le m<n$, let $P_n$ be an extremal convex $n$-gon for which
\begin{equation}
\frac{\max(\Delta(Q): Q\,\, \text{is an}\, m-\text{gon contained in}\,\, P_n)}{\Delta(P_n)}=f_{n}(m).
\end{equation}
Consider a finite sequence of convex polygons, $P_{n-1}, P_{n-2}, \ldots, P_{m+1}, P_m$, constructed as follows: for each $m\le k\le n-1$, $P_k$ is the convex $k$-gon obtained by removing the smallest area ear of $P_{k+1}$.
It follows that
\begin{equation*}
\frac{\Delta(P_k)}{\Delta(P_{k+1})}\ge f_{k+1}(k),
\end{equation*}
and therefore by using part (a) we obtain
\begin{equation*}
f_n(m)\ge \frac{\Delta(P_m)}{\Delta(P_n)}=\prod_{k=m}^{n-1}\frac{\Delta(P_k)}{\Delta(P_{k+1})}\ge \prod_{k=m}^{n-1} f_{k+1}(k)\ge \prod_{k=m}^{n-1}\frac{k}{k+1}=\frac{m}{n}.
\end{equation*}

In particular, $f_7(5)\ge 5/7=0.714\ldots$ improving the bound $f_7(5)\ge 5/(10-\sqrt{5})=0.644\ldots$ due to Du and Ding \cite{DD06}.
\end{proof}

Inequalities \eqref{lowerg} and \eqref{uppergn} provide bounds for $g_n$. However, the upper bound is rather weak as we expect that $g_n=\Theta(n^{-3})$.
We prove that this is indeed the case.

\begin{thm}\label{upperg}
For every $n\ge 4$
\begin{equation*}
g_n \leq \frac{4}{n} \cdot \sin^{2}\left(\frac{2\pi}{n}\right).
\end{equation*}
\end{thm}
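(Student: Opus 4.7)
The statement $g_n\le \frac{4}{n}\sin^2(2\pi/n)$ is equivalent to the assertion that every convex $n$-gon $P=A_1A_2\cdots A_n$ has at least one ear $T_k:=\Delta(A_{k-1}A_kA_{k+1})$ of area at most $\frac{4\sin^2(2\pi/n)}{n}\Delta(P)$. Since $\min_k T_k \le \frac{1}{n}\sum_{k=1}^n T_k$, my plan is to reduce the theorem to the sum estimate
\[
\sum_{k=1}^n T_k \;\le\; 4\sin^2(2\pi/n)\,\Delta(P).
\]

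To prove this sum estimate I introduce the midpoint polygon $M_1M_2\cdots M_n$ with $M_k:=\tfrac{1}{2}(A_k+A_{k+1})$. Because $\overrightarrow{A_kM_{k-1}}=\tfrac{1}{2}\overrightarrow{A_kA_{k-1}}$ and $\overrightarrow{A_kM_k}=\tfrac{1}{2}\overrightarrow{A_kA_{k+1}}$, the bilinearity of the outer product from Section~1 yields $\Delta(A_kM_{k-1}M_k) = \tfrac{1}{4}\Delta(A_{k-1}A_kA_{k+1}) = T_k/4$. Since the $n$ small corner triangles $A_kM_{k-1}M_k$ partition the region $P\setminus M_1M_2\cdots M_n$, we obtain the identity
\[
\Delta(P)-\Delta(M_1M_2\cdots M_n)\;=\;\tfrac{1}{4}\sum_{k=1}^n T_k,
\]
and hence the target sum estimate is equivalent to the midpoint-area lower bound $\Delta(M_1M_2\cdots M_n) \ge \cos^2(2\pi/n)\,\Delta(P)$.

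The proof of this midpoint lower bound is the main obstacle. I would attempt it via the discrete Fourier representation: identifying each $A_k$ with a complex number and writing $A_k=\sum_{j=0}^{n-1}c_j\omega^{jk}$ for $\omega=e^{2\pi i/n}$, the midpoint operator multiplies $c_j$ by $\tfrac{1}{2}(1+\omega^j)$, of squared modulus $\cos^2(\pi j/n)$, and a direct computation expresses both $\Delta(P)$ and $\Delta(M_1\cdots M_n)$ as linear combinations of the numbers $|c_j|^2\sin(2\pi j/n)$. The required inequality then becomes a weighted-average statement on the values $\cos^2(\pi j/n)$, and the key difficulty is to show that convexity of $P$ forces the low-frequency modes $j\in\{1,n-1\}$ (where $\cos^2(\pi j/n)=\cos^2(\pi/n)$) to dominate the intermediate modes $3\le j\le n-3$ (where $\cos^2(\pi j/n)<\cos^2(2\pi/n)$). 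The case $n=4$ is Varignon's parallelogram (giving the stronger factor $\tfrac12$), and the regular $n$-gon, having only the $j=1$ mode, satisfies the sharper $\cos^2(\pi/n)$; the weakening to $\cos^2(2\pi/n)$ in the general bound is exactly what is needed to absorb the largest admissible contribution from the $j=2$ mode of a non-regular convex polygon.

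Once the midpoint lower bound is established, the theorem follows immediately from
\[
\sum_{k=1}^n T_k \,=\, 4\bigl(\Delta(P)-\Delta(M_1\cdots M_n)\bigr) \,\le\, 4\bigl(1-\cos^2(2\pi/n)\bigr)\Delta(P) \,=\, 4\sin^2(2\pi/n)\,\Delta(P),
\]
and dividing by $n$ yields $g_n=\max_P\min_k T_k/\Delta(P)\le \tfrac{4}{n}\sin^2(2\pi/n)$.
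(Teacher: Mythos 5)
Your reduction from the minimum ear to the average ear is where the argument breaks irreparably. The sum estimate $\sum_k T_k \le 4\sin^2(2\pi/n)\,\Delta(P)$ --- equivalently, via your (correct) midpoint identity $\Delta(P)-\Delta(M_1\cdots M_n)=\tfrac14\sum_k T_k$, the lower bound $\Delta(M_1\cdots M_n)\ge \cos^2(2\pi/n)\,\Delta(P)$ --- is simply false for large $n$. Take a triangle $V_1V_2V_3$ and place $A_1,\ldots,A_{n-2}$ in convex position on a tiny arc near $V_1$, followed by $A_{n-1}=V_2$ and $A_n=V_3$. Every ear with two clustered vertices is negligible, but the two ears $A_{n-2}A_{n-1}A_n$ and $A_{n-1}A_nA_1$ each have area approaching $\Delta(P)$, so $\sum_k T_k \to 2\Delta(P)$, while your claimed bound $4\sin^2(2\pi/n)\,\Delta(P)$ tends to $0$; already at $n=9$ one has $4\sin^2(2\pi/9)\approx 1.65 < 2$. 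Equivalently, the midpoint polygon of this example has area tending to $\tfrac12\Delta(P)$, which is below $\cos^2(2\pi/n)$ for $n\ge 9$ (e.g.\ $\cos^2(2\pi/9)\approx 0.59$). The theorem survives in this example only because the \emph{minimum} ear is tiny; passing to the arithmetic mean discards exactly the information that matters, since a bounded number of ears may each have size $\Theta(\Delta(P))$. This also means the Fourier step you flag as the ``main obstacle'' cannot be completed for general $n$, because the inequality it is meant to establish is false; and even in the small range of $n$ where the midpoint bound might hold, you offer only a plan, not a proof.

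The paper's proof is built precisely to avoid this failure mode: instead of the arithmetic mean it bounds the much flatter mean $\sum_k \sqrt[4]{2T_k}$, using Cauchy--Schwarz, AM--GM, and Jensen's inequality for $\sqrt{\sin x}$, which yields $\min_k T_k \le L^2\sin(2\pi/n)/(2n^2)$; it then controls $L^2/\Delta$ by John's theorem (after an affine map, the polygon lies between concentric circles $C$ and $2C$), giving $\Delta\ge nr^2\tan(\pi/n)$ and $L\le 4nr\sin(\pi/n)$. A fourth-root mean is dominated by the many small ears, so a handful of ears of size $\Theta(\Delta)$ --- as in the clustered example above --- contributes only $O(1)$ of the $n$ terms and does no harm. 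Any repair of your approach must likewise bound a quantity that is insensitive to a few large ears; the average, and hence the midpoint-polygon area, cannot serve that role.
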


\begin{proof}

By a celebrated result of John \cite {J48}, it is known that every
convex polygon lies between two concentric homothetic ellipses of
ratio $2$ - see figure 3.

After an appropriate affine transformation
$T$, these ellipses are mapped into a pair of concentric circles,
$C$ and $2C$. Since such a transformation preserves ratios, it would
be sufficient to prove that the result holds for the polygon
$T(P)=A_1A_2\ldots A_n$.

\vspace{0.5cm}

\begin{tikzpicture}[line cap=round,line join=round,>=triangle 45,x=1.0cm,y=1.0cm,scale=0.4]\label{fig3}
\clip(-1,-5.313299868620075) rectangle (40.522175265514385,15.617799346279181);
\fill[line width=2.pt,color=zzttqq,fill=zzttqq,fill opacity=0.10000000149011612] (7.367953226647206,13.300998288141466) -- (2.6145165728819166,11.024142075833725) -- (0.9368330480235795,6.0310363470886745) -- (1.9753990396025503,-0.5199183690248296) -- (8.007070759926572,-3.116333347972255) -- (12.960231642841665,1.3974342308132692) -- (13.639294021950992,7.269326567817447) -- (11.122768734663486,11.143976613323606) -- cycle;
\fill[line width=2.pt,color=zzttqq,fill=zzttqq,fill opacity=0.10000000149011612] (29.537342662275236,12.50210137154226) -- (24.664071471020065,10.664638463364081) -- (22.1076013379026,6.110926038748596) -- (23.665450325271056,-0.6796977523446712) -- (30.615853499684167,-2.9166091188224534) -- (37.04697367830779,2.3161656849023586) -- (37.24669790745763,7.1494920303275675) -- (34.57039323685028,10.664638463364083) -- cycle;
\draw [rotate around={90.:(7.,5.)},line width=2.pt] (7.,5.) ellipse (4.89667527342852cm and 3.8700683112066994cm);
\draw [rotate around={90.:(7.,5.)},line width=2.pt] (7.,5.) ellipse (9.793350546857042cm and 7.7401366224134005cm);
\draw [line width=2.pt,color=zzttqq] (7.367953226647206,13.300998288141466)-- (2.6145165728819166,11.024142075833725);
\draw [line width=2.pt,color=zzttqq] (2.6145165728819166,11.024142075833725)-- (0.9368330480235795,6.0310363470886745);
\draw [line width=2.pt,color=zzttqq] (0.9368330480235795,6.0310363470886745)-- (1.9753990396025503,-0.5199183690248296);
\draw [line width=2.pt,color=zzttqq] (1.9753990396025503,-0.5199183690248296)-- (8.007070759926572,-3.116333347972255);
\draw [line width=2.pt,color=zzttqq] (8.007070759926572,-3.116333347972255)-- (12.960231642841665,1.3974342308132692);
\draw [line width=2.pt,color=zzttqq] (12.960231642841665,1.3974342308132692)-- (13.639294021950992,7.269326567817447);
\draw [line width=2.pt,color=zzttqq] (13.639294021950992,7.269326567817447)-- (11.122768734663486,11.143976613323606);
\draw [line width=2.pt,color=zzttqq] (11.122768734663486,11.143976613323606)-- (7.367953226647206,13.300998288141466);
\draw [line width=2.pt] (29.537342662275236,5.431863659639269) circle (4.667397555544487cm);
\draw [line width=2.pt] (29.537342662275236,5.431863659639269) circle (9.334795111088974cm);
\draw [line width=2.pt,color=zzttqq] (29.537342662275236,12.50210137154226)-- (24.664071471020065,10.664638463364081);
\draw [line width=2.pt,color=zzttqq] (24.664071471020065,10.664638463364081)-- (22.1076013379026,6.110926038748596);
\draw [line width=2.pt,color=zzttqq] (22.1076013379026,6.110926038748596)-- (23.665450325271056,-0.6796977523446712);
\draw [line width=2.pt,color=zzttqq] (23.665450325271056,-0.6796977523446712)-- (30.615853499684167,-2.9166091188224534);
\draw [line width=2.pt,color=zzttqq] (30.615853499684167,-2.9166091188224534)-- (37.04697367830779,2.3161656849023586);
\draw [line width=2.pt,color=zzttqq] (37.04697367830779,2.3161656849023586)-- (37.24669790745763,7.1494920303275675);
\draw [line width=2.pt,color=zzttqq] (37.24669790745763,7.1494920303275675)-- (34.57039323685028,10.664638463364083);
\draw [line width=2.pt,color=zzttqq] (34.57039323685028,10.664638463364083)-- (29.537342662275236,12.50210137154226);
\draw (23.26600186697147,12.2) node[anchor=north west] {$A_1$};
\draw (20.150303892234554,7.189436876157535) node[anchor=north west] {$A_2$};
\draw (21.628263187943094,-0.9193668273244294) node[anchor=north west] {$A_3$};
\draw (37,8.307892559396425) node[anchor=north west] {$A_{n-2}$};
\draw (34.091055086890755,12.04204621737223) node[anchor=north west] {$A_{n-1}$};
\draw (28.65855605401613,14.119178200530172) node[anchor=north west] {$A_n$};
\draw (26.90098283749787,11.942873529922823) node[anchor=north west] {$l_n$};
\draw (23.505670941951234,9.266568859315475) node[anchor=north west] {$l_1$};
\draw (22.586939487862143,4.193573438910503) node[anchor=north west] {$l_2$};
\draw (26.221920458388546,-0.2) node[anchor=north west] {$l_3$};
\draw (30.735688037174075,12.062708067412705) node[anchor=north west] {$l_{n-1}$};
\draw (33.73155147442111,9.66601731761508) node[anchor=north west] {$l_{n-2}$};
\draw (27.460210679117317,2.91533837235177) node[anchor=north west] {$C$};
\draw (34.49050354519036,15.058571504659735) node[anchor=north west] {$2C$};
\begin{scriptsize}
\draw [fill=ududff] (7.367953226647206,13.300998288141466) circle (2.5pt);
\draw [fill=ududff] (2.6145165728819166,11.024142075833725) circle (2.5pt);
\draw [fill=ududff] (0.9368330480235795,6.0310363470886745) circle (2.5pt);
\draw [fill=ududff] (1.9753990396025503,-0.5199183690248296) circle (2.5pt);
\draw [fill=ududff] (8.007070759926572,-3.116333347972255) circle (2.5pt);
\draw [fill=ududff] (12.960231642841665,1.3974342308132692) circle (2.5pt);
\draw [fill=ududff] (13.639294021950992,7.269326567817447) circle (2.5pt);
\draw [fill=ududff] (11.122768734663486,11.143976613323606) circle (2.5pt);
\draw [fill=ududff] (29.537342662275236,12.50210137154226) circle (2.5pt);
\draw [fill=ududff] (24.664071471020065,10.664638463364081) circle (2.5pt);
\draw [fill=ududff] (22.1076013379026,6.110926038748596) circle (2.5pt);
\draw [fill=ududff] (23.665450325271056,-0.6796977523446712) circle (2.5pt);
\draw [fill=ududff] (30.615853499684167,-2.9166091188224534) circle (2.5pt);
\draw [fill=ududff] (37.04697367830779,2.3161656849023586) circle (2.5pt);
\draw [fill=ududff] (37.24669790745763,7.1494920303275675) circle (2.5pt);
\draw [fill=ududff] (34.57039323685028,10.664638463364083) circle (2.5pt);
\end{scriptsize}
\end{tikzpicture}
\vspace{-1cm}
\begin{figure}[h!]
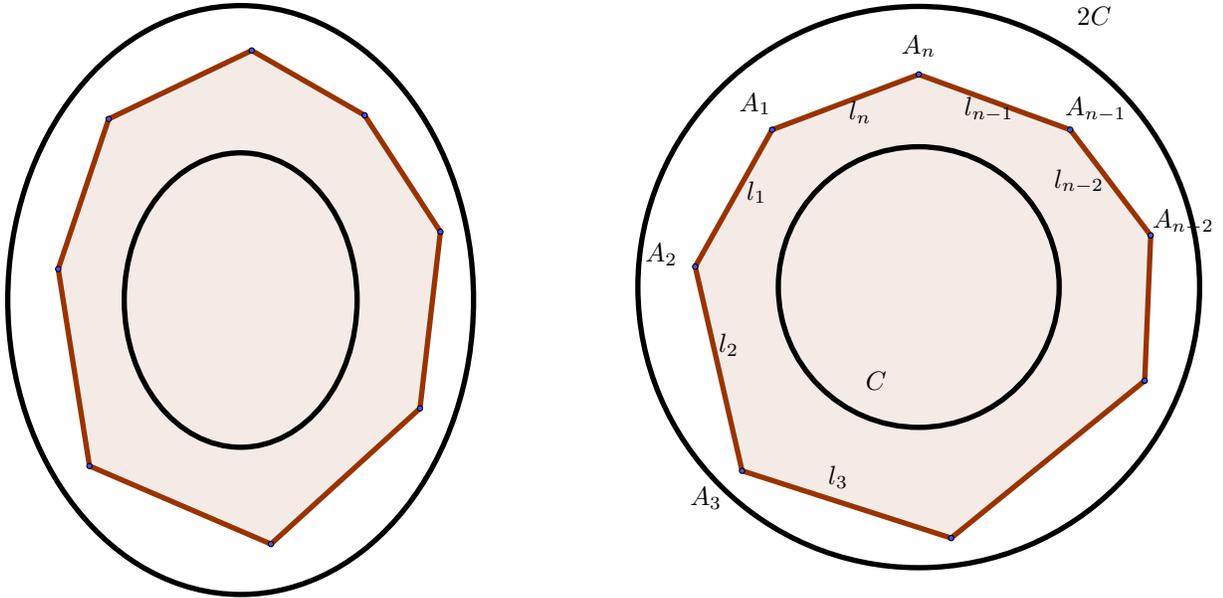

\caption{A convex polygon lying between two concentric circles, $C$ and $2C$}
\end{figure}

\noindent Denote the lengths of the sides $A_1A_2$, $A_2A_3\ldots$, $A_nA_1$ by
$l_1$, $l_2,\ldots$, $l_n$, respectively, and denote by
$L=l_1+l_2+\ldots l_n$, the perimeter of $A_1A_2\ldots A_n$. We have
that
\begin{eqnarray*}
\sum_{k=1}^n \sqrt[4]{2\Delta(A_{k-1}A_kA_{k+1})} &= &\sum_{k=1}^n \sqrt[4]{l_kl_{k+1}} \cdot \sqrt[4]{\sin(\angle{A_k})}\leq \\
&\leq&\left(\sum_{k=1}^n \sqrt{l_kl_{k+1}}\right) ^{1/2} \cdot \left(\sum_{k=1}^n \sqrt{\sin(\angle{A_k})}\right)^{1/2}\leq\\
&\leq&\left(\sum_{k=1}^n \frac{l_k + l_{k+1}}{2}\right)^{1/2} \cdot \left(n \sqrt{\sin\left(\frac{\sum_{k=1}^n\angle{A_k}}{n}\right)}\right)^{1/2}=\\
&=&\left(\sum_{k=1}^nl_k\right)^{1/2}\cdot\sqrt{n}\cdot\sqrt[4]{\sin\frac{(n-2)\pi}{n}}=L^{1/2}\cdot\sqrt{n}\cdot\sqrt[4]{\sin\left(\frac{2\pi}{n}\right)},
\end{eqnarray*}
\noindent where we used Cauchy-Schwarz inequality, the
geometric-arithmetic mean inequality and Jensen's inequality for the
concave function $\sqrt{\sin{x}}$ on the interval $(0,\,\pi)$. It
follows that
\begin{equation*}
\min_{1\le k\le n}\Delta(A_{k-1}A_kA_{k+1})\leq\frac{L^2\cdot \sin
\left(\frac{2\pi}{n}\right)}{2n^2},\quad\text{and after dividing
by}\,\, \Delta=\Delta(A_1A_2\ldots A_n)
\end{equation*}
\begin{equation}\label{A}
\min_{1\le k\le n}\frac{\Delta(A_{k-1}A_kA_{k+1})}
{\Delta}\leq\frac{L^2}{\Delta}\cdot\frac{sin\left(\frac{2\pi}{n}\right)}{2n^2}
\end{equation}
\noindent The polygon $A_1A_2...A_n$ contains a circle of radius
$r$, hence its area is at least as large as the area of the regular
$n$-gon circumscribed about a circle of radius $r$, that is,
\begin{equation}\label{Area}
\Delta \geq nr^2\tan\left(\frac{\pi}{n}\right).
\end{equation}
\noindent On the other hand, the polygon $A_1A_2...A_n$ is contained
in a circle of radius $2r$, hence its perimeter is no greater than
the perimeter of the regular $n$-gon inscribed in a circle of radius
$2r$, that is
\begin{equation}\label{perim}
L \leq 4nr\sin\left(\frac{\pi}{n}\right).
\end{equation}
Combining $(\ref {A}), (\ref {Area})$, and $(\ref {perim})$ we
obtain
\begin{equation*}
\min_{1\le k\le n} \frac{\Delta(A_{k-1}A_kA_{k+1})}{\Delta} \leq
\frac{16n^2r^2\sin^2\left(\frac{\pi}{n}\right)}{nr^2\tan\left(\frac{\pi}{n}\right)}
\cdot \frac{\sin\left(\frac{2\pi}{n}\right)}{2n^2}= \frac{4}{n}
\cdot \sin^2\left(\frac{2\pi}{n}\right).
\end{equation*}
This completes the proof.
\end{proof}


\end{document}